\newtheorem{thm}{Theorem}[section]
\newtheorem{lem}{Lemma}[section]
\newtheorem{dfn}{Definition}[section]
\newtheorem{cor}{Corollary}[section]
\newtheorem{prob}{Problem}[section]
\DeclareMathOperator{\diam}{diam}
\DeclareMathOperator{\rank}{rank}
\DeclareMathOperator{\Spec}{Spec}
\DeclareMathOperator{\MAX}{MAX}
\title{On the eccentricity matrices of trees: Inertia and spectral symmetry}
\author{Iswar Mahato \thanks{Department of Mathematics, Indian Institute of Technology Kharagpur, Kharagpur 721302, India. Email: iswarmahato02@gmail.com}\  \and M. Rajesh Kannan\thanks{Department of Mathematics, Indian Institute of Technology Kharagpur, Kharagpur 721302, India. Email: rajeshkannan@maths.iitkgp.ac.in, rajeshkannan1.m@gmail.com }}
\date{\today}
\begin{document}
\maketitle

\begin{abstract}
The \textit{eccentricity matrix} $\mathcal{E}(G)$ of a connected graph $G$ is obtained from the distance matrix of $G$ by keeping the largest non-zero entries in each row and each column, and leaving zeros in the remaining ones. The eigenvalues of $\mathcal{E}(G)$ are  the  \textit{$\mathcal{E}$-eigenvalues} of $G$. In this article, we find the inertia of the eccentricity matrices of trees. Interestingly,  any tree on more than $4$ vertices  with odd diameter has two positive and two negative $\mathcal{E}$-eigenvalues (irrespective of the structure of the tree). A tree with even diameter has the same number of  positive and negative $\mathcal{E}$-eigenvalues, which is equal to the number of 'diametrically distinguished' vertices (see Definition \ref{dia-distin}).   Besides we prove that the spectrum of the eccentricity matrix of a tree is symmetric with respect to the origin if and only if the tree has odd diameter. As an application, we characterize the trees with three distinct  $\mathcal{E}$-eigenvalues.
\end{abstract}

{\bf AMS Subject Classification (2010):} 05C12, 05C50.

\textbf{Keywords.} Tree, Eccentricity matrix, $\mathcal{E}$-eigenvalue, Inertia, Haynsworth inertia additivity formula, Schur complement.

\section{Introduction}\label{sec1}
Throughout this paper, we consider simple, undirected  and connected graphs.  Let $G=(V(G),E(G))$ be a graph with  vertex set $V(G)=\{v_1,v_2,\hdots,v_n\}$ and  edge set $E(G)=\{e_1, \dots , e_m \}$. The \textit{adjacency matrix }of a graph $G$ on $n$ vertices, denoted by $A(G)$, is an $n \times n$ matrix whose rows and columns are indexed by the vertices of $G$ and the entries are defined as follows:  $A(G)=(a_{uv})$, where $a_{uv}=1$ if the vertices $u$ and $v$ are adjacent, and $a_{uv}=0$ otherwise. We fix an ordering for the vertex set, whenever we associate matrices with the given graph. The \emph{distance} between the vertices $u,v\in V(G)$, denoted by $d(u,v)$, is the minimum length of the paths between the vertices $u$  and $v$ in $G$, and define $d(u,u) =0$  for all $u \in V(G)$. The \emph{distance matrix} of $G$, denoted by $D(G)$, is an $n \times n$ matrix whose rows and columns are indexed by the vertices of $G$ and the entries are defined as follows:  $D(G)=(d_{uv})$, where $d_{uv}=d(u,v)$.  The \emph{diameter} of $G$, denoted by  $\diam(G)$, is the greatest distance between any pair of vertices  in $G$.  A \textit{diametrical path} is a path whose length equals to the diameter of $G$.

The \emph{eccentricity} $e(u)$ of a vertex $u$ is defined  as $e(u)=\max \{d(u,v):  v \in V(G)\}$.
The \textit{eccentricity matrix} $\mathcal{E}(G)$ of a connected graph $G$ on $n$ vertices is an $n\times n$ matrix with the rows and columns are indexed by the vertices of $G$, and the entries are defined as follows:
$${\mathcal{E}(G)}_{uv}=
\begin{cases}
\text{$d(u,v)$} & \quad\text{if $d(u,v)=\min\{e(u),e(v)\}$,}\\
\text{0} & \quad\text{otherwise.}
\end{cases}$$
In \cite{ran1,ran2},  Randi\'{c}  introduced the notion of eccentricity matrix of a graph, then known as $D_{\MAX}$-matrix. It was renamed as eccentricity matrix by Wang et al. in \cite{ecc-main}. The eigenvalues of $\mathcal{E}(G)$ are \textit{the $\mathcal{E}$-eigenvalues} of $G$.
Since $\mathcal{E}(G)$ is symmetric, all of its eigenvalues are real. Let $\xi_1>\xi_2>\hdots >\xi_k$ be all the distinct $\mathcal{E}$-eigenvalues of $G$. Then the $\mathcal{E}$-spectrum of $G$ is denoted by $\Spec_{\mathcal{E}}(G)$, and is defined as
\[ \Spec_{\mathcal{E}}(G)=
\left\{ {\begin{array}{cccc}
    \xi_1 & \xi_2  &\hdots & \xi_k\\
    m_1& m_2& \hdots &m_k\\
    \end{array} } \right\},
\]
where $m_i$ is the  multiplicity of $\xi_i$ for $i=1,2,\hdots,k$. As $\mathcal{E}(G)$ is entrywise nonnegative, by the Perron-Frobenius theorem, the spectral radius of $\mathcal{E}(G)$ is the largest eigenvalue of $\mathcal{E}(G)$. The spectral radius of $\mathcal{E}(G)$ is called the $\mathcal{E}$-spectral radius of $G$, and  is denoted by $\rho(\mathcal{E}(G))$.

Recently, the eccentricity matrices inspired much interest and attracted the attention of researchers. Wang et al. \cite{wang2020spectral} studied some spectral properties of graphs based on the eccentricity matrix. Mahato et al. \cite{mahato2020spectra} studied the spectra of eccentricity matrices of graphs. Recently, Wei et al. \cite{wei2020solutions} characterized the trees which have the minimum $\mathcal{E}$-spectral radius with the given diameter. As a consequence, they determined the trees on $n$ vertices with minimum $\mathcal{E}$-spectral radius, which solved a conjecture posted in \cite{ecc-main}. The eccentricity energy change of complete multipartite graphs due to an edge deletion is studied by Mahato and Kannan  \cite{mahato2022eccentricity}. Wei and Li \cite{wei2022eccentricity} established a relationship between the majorization and $\mathcal{E}$-spectral radii of complete multipartite graphs.  For more details about the eccentricity matrices of graphs,we refer to \cite{he2022largest,lei2022spectral,lei2021eigenvalues,qiu2022eccentricity, ran1,ran2, wang2020boiling,ecc-main,wei2022characterizing}.

%One of the main applications of eccentricity matrix is in chemical graph theory. Randi\'c, Orel and Balaban \cite{ran2} indicated that the eccentricity matrix appears very sensitive to the branching pattern of molecular graphs studied in theoretical chemistry. In \cite{ecc-main}, the authors gave some applications of this novel matrix in terms of molecular descriptors. Recently, Wang et al. \cite{wang2020boiling} described some applications of eccentricity matrix to the boiling point of hydrocarbons. For more details about the applications of eccentricity matrices of graphs, we refer to \cite{ran1,ran2,wang2020boiling,ecc-main}.

From the spectral graph theory perspective, the eccentricity matrix is substantially different from the adjacency matrix and the distance matrix. The adjacency matrix and the distance matrix of a connected graph are always irreducible, while the eccentricity matrix of a connected graph need not be irreducible \cite{ecc-main}. For example, the eccentricity matrix of a complete bipartite graph of order $n$ and maximum degree less than $n-1$ is reducible. In contrast, the eccentricity matrix of a tree with at least two vertices is irreducible \cite{ecc-main}. The distance matrix of a tree is always invertible. However, the eccentricity matrix of a tree need not be invertible \cite{mahato2021spectral}.

\textit{The inertia} $In(M)$ of a symmetric matrix $M$ is the triple $\big(n_{+}(M),n_{-}(M),n_{0}(M)\big)$, where $n_{+}(M),$ $n_{-}(M)$ and $n_{0}(M)$ denote the number of positive, negative and zero eigenvalues of $M$, respectively. There are only a few graphs for which the inertias of eccentricity matrices are known. In \cite{mahato2020spectra}, Mahato et al. computed the inertia of eccentricity matrices of the path and the lollipop graphs. Recently, Patel et al. \cite{patel2021energy} studied the inertia of coalescence of graphs. One of the main objectives of this article is to compute the inertia of eccentricity matrices of trees. The inertia of any tree on more than $4$ vertices with an odd diameter is $(2,2,n-4)$ irrespective of the structure of the tree. The inertia of any tree with even diameter is $(l,l,n-2l)$, where $l$ is the number of 'diametrically distinguished' vertices (See Definition \ref{dia-distin}). This is done in Section \ref{sec3}.

For the adjacency matrix $A(G)$ of a graph $G$, it is known that the eigenvalues of $A(G)$ are symmetric about the origin if and only if $G$ is a bipartite graph. In Section \ref{sec4}, we consider this problem for the eccentricity matrices of trees and prove that the $\mathcal{E}$-eigenvalues of a tree $T$ is symmetric with respect to the origin if and only if $T$ is a tree with odd diameter.

\section{Preliminaries}
In this section, we introduce some of the needed notations and results. Let ${\mathbb {R}^{n\times n}}$ denote the set of all $n\times n$ matrices with real entries. For $A\in {\mathbb {R}^{n\times n}}$, let $A^{\prime}$, $\det (A)$, $\det(\lambda I-A)$ denote \textit{the transpose, determinant and characteristic polynomial of $A$}, respectively. Let $J$ and \textbf{0} denote the \textit{all one matrix} and \textit{all zero matrix} of appropriate sizes, respectively. For a real number $x$, $\lfloor x \rfloor$ denotes the greatest integer less than or equal to $x$, and $\lceil x \rceil$ denotes the least integer greater than or equal to $x$.

Let $A$ be an $n\times n$ matrix partitioned as
$ A=
\left( {\begin{array}{cc}
    A_{11} & A_{12} \\
    A_{21} & A_{22} \\
    \end{array} } \right)$,
where $A_{11}$ and $A_{22}$ are square matrices. If $A_{11}$ is nonsingular, then \textit{the Schur complement of $A_{11}$ in $A$}, denoted by $A/A_{11}$, is defined as $A_{22}-A_{21}{A_{11}^{-1}}A_{12}$. For Schur complement, we have $ \det A= (\det A_{11})\det(A_{22}-A_{21}{A_{11}^{-1}}A_{12})$.
Similarly, if $A_{22}$ is nonsingular, then \textit{the Schur complement of $A_{22}$ in A}, denoted by $A/A_{22}$, is $A_{11}-A_{12}{A_{22}^{-1}}A_{21}$, and we have $\det A= (\det A_{22})\det(A_{11}-A_{12}{A_{22}^{-1}}A_{21})$. In the following theorem, we state the well known Haynsworth inertia additivity formula.

\begin{thm}[{\cite[Theorem 1]{hayans}}]\label{Hyansworth}
Let $H$ be an $n\times n$ Hermitian matrix partitioned as
\[ H=
\left( {\begin{array}{cc}
    H_{11} & H_{12} \\
    H_{21} & H_{22} \\
    \end{array} } \right).\]
If $H_{11}$ is nonsingular, then $In(H)=In(H_{11})+In(H/H_{11})$, where $H/H_{11}$ is the Schur complement of $H_{11}$.
\end{thm}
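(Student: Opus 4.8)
The plan is to exhibit an explicit nonsingular matrix $S$ for which $SHS^{*}$ is block diagonal with diagonal blocks $H_{11}$ and the Schur complement $H/H_{11}$, and then to invoke Sylvester's law of inertia, which guarantees that congruent Hermitian matrices have identical inertia. Since $H_{11}$ is nonsingular by hypothesis, the Schur complement $H/H_{11}=H_{22}-H_{21}H_{11}^{-1}H_{12}$ is well defined; and since $H$ is Hermitian we have $H_{21}=H_{12}^{*}$ and $H_{11}^{-1}=(H_{11}^{-1})^{*}$, so $H/H_{11}$ is again Hermitian and speaking of its inertia is legitimate.

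First I would take the block unitriangular matrix
\[ S=\left({\begin{array}{cc} I & 0 \\ -H_{21}H_{11}^{-1} & I \end{array}}\right), \]
which is nonsingular with $\det S=1$. Performing the block Gaussian elimination $SH$ clears the $(2,1)$ block and produces $H/H_{11}$ in the $(2,2)$ position, while the subsequent right multiplication by $S^{*}=\left({\begin{array}{cc} I & -H_{11}^{-1}H_{12} \\ 0 & I \end{array}}\right)$ clears the $(1,2)$ block; using $H_{21}^{*}=H_{12}$ one checks by a short computation that
\[ SHS^{*}=\left({\begin{array}{cc} H_{11} & 0 \\ 0 & H/H_{11} \end{array}}\right). \]

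Because $S$ is nonsingular, $SHS^{*}$ is congruent to $H$, so Sylvester's law of inertia gives $In(H)=In(SHS^{*})$. The matrix $SHS^{*}$ is Hermitian and block diagonal, so its characteristic polynomial factors as the product of those of the two blocks, whence each eigenvalue (with multiplicity) belongs to exactly one block and $In(SHS^{*})=In(H_{11})+In(H/H_{11})$; combining the two equalities yields the claim. I do not expect a genuine obstacle here: the argument is short block-matrix algebra once the right $S$ is chosen. The only points that require care are verifying that the Schur-complement elimination is a congruence rather than merely a similarity---so that Sylvester's law actually applies---and confirming that $H/H_{11}$ inherits the Hermitian property of $H$, which is precisely what makes its inertia well defined.
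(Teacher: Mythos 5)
Your proof is correct. Note that the paper does not prove this statement at all: it is quoted as a known result from Haynsworth's work, so there is no internal proof to compare against. Your argument—block elimination by the unitriangular matrix $S$, the congruence $SHS^{*}=\left( \begin{array}{cc} H_{11} & 0 \\ 0 & H/H_{11} \end{array} \right)$, and Sylvester's law of inertia—is the standard (essentially Haynsworth's original) route, and all the delicate points are handled: you use $H_{21}=H_{12}^{*}$ and the Hermitian property of $H_{11}$ to identify $S^{*}$ correctly, you verify that $H/H_{11}$ is Hermitian so its inertia is defined, and you correctly observe that the transformation is a congruence (not merely a similarity), which is exactly what Sylvester's law requires.
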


A principal submatrix of a square matrix $A$ is the matrix obtained by deleting any $k$ rows and the corresponding $k$ columns from $A$. The determinant of the principal submatrix is called the principal minor of $A$.
\begin{thm}[{\cite[Page 53]{hor-john-mat}}]\label{ch-poly-minor}
Let $A$ be an $n \times n$ real matrix and let $E_k(A)$ denote the sum of its principal minors of size $k$. Then, the characteristic polynomial of $A$ is given by
$$\phi(\lambda)=\lambda^n-E_1(A){\lambda}^{n-1}+\hdots+(-1)^{n-1}E_{n-1}(A)\lambda+ (-1)^{n}E_{n}(A).$$
\end{thm}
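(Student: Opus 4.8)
The plan is to prove this by computing $\det(\lambda I - A)$ directly, exploiting the multilinearity of the determinant in its columns. First I would write the $j$-th column of $\lambda I - A$ as $\lambda e_j - a_j$, where $e_j$ is the $j$-th standard basis vector and $a_j$ is the $j$-th column of $A$. Since the determinant is linear in each column separately, expanding all $n$ columns produces a sum of $2^n$ determinants, one for each subset $S \subseteq \{1, \dots, n\}$ recording precisely those columns in which the summand $-a_j$ (rather than $\lambda e_j$) has been selected.

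For the term indexed by $S$, I would factor the scalar $\lambda$ out of each of the $n - |S|$ columns in which $\lambda e_j$ was chosen, and the scalar $-1$ out of each of the $|S|$ columns in which $-a_j$ was chosen. This leaves the factor $(-1)^{|S|}\lambda^{n-|S|}$ multiplying the determinant of the matrix $N^S$ whose $j$-th column equals $e_j$ for $j \notin S$ and equals $a_j$ for $j \in S$.

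The key step, and the main technical point, is to identify $\det(N^S)$ with the principal minor $\det(A[S])$ of $A$ on the index set $S$. I would argue this through the Leibniz (permutation) expansion of $\det(N^S)$: for each $j \notin S$ the column $e_j$ has its only nonzero entry in row $j$, so any permutation contributing a nonzero term must fix every index outside $S$. Hence the surviving permutations are exactly the permutations of $S$, their signs coincide with the signs of the corresponding permutations of $S$, and the sum collapses to $\det(A[S])$, with the convention that the empty set contributes $1$. The main obstacle is being careful with this sign and cofactor bookkeeping, to confirm that the positions of the $e_j$ columns introduce no spurious sign.

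Finally I would collect the $2^n$ terms according to $k = |S|$: summing over all $S$ with $|S| = k$ gives $(-1)^k \lambda^{n-k}$ times $\sum_{|S|=k}\det(A[S]) = E_k(A)$, so that $\det(\lambda I - A) = \sum_{k=0}^n (-1)^k E_k(A)\lambda^{n-k}$. Writing $\phi(\lambda)=\det(\lambda I - A)$ and reading off the terms $k=0,1,\dots,n$ then reproduces the claimed expansion exactly, with leading term $\lambda^n$, next term $-E_1(A)\lambda^{n-1}$, and constant term $(-1)^n E_n(A)$. An alternative route, relating the coefficients to elementary symmetric functions of the eigenvalues via exterior powers, is available, but the column-expansion argument has the advantage of being entirely self-contained and of proving the principal-minor description directly rather than through the spectrum.
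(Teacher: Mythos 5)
Your proposal is correct. Note that the paper itself gives no proof of this statement: it is quoted as a known preliminary result, cited to Horn and Johnson, so there is no internal argument to compare against. Your column-by-column multilinearity expansion is the standard textbook proof of exactly this fact, and every step checks out: the $2^n$-term expansion indexed by subsets $S$, the factorization $(-1)^{|S|}\lambda^{n-|S|}\det(N^S)$, the Leibniz-expansion collapse showing $\det(N^S)=\det(A[S])$ (the permutations that survive are precisely those fixing every index outside $S$, and the sign of such a permutation equals the sign of its restriction to $S$, so no spurious sign arises), and the final regrouping by $k=|S|$ giving $\det(\lambda I - A)=\sum_{k=0}^{n}(-1)^{k}E_k(A)\lambda^{n-k}$. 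This is self-contained and complete, and it is the argument the cited source is standing in for.
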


We will need the following result about the interlacing of eigenvalues.
\begin{thm}[{\cite[Theorem 4.3.28]{hor-john-mat}}]\label{interlace}
 Let $M$ be a real symmetric matrix of order $m$, and let $N$ be its principal submatrix of order $n<m$. If $\lambda_1\geq \lambda_2\geq \hdots \geq \lambda_m$ are the eigenvalues of $M$ and $\mu_1\geq \mu_2\geq \hdots \geq \mu_n$ are the eigenvalues of $N$, then $\lambda_{m-n+i} \leq \mu_i \leq \lambda_i$ for $1\leq i \leq n$.
\end{thm}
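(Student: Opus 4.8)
The plan is to derive both interlacing inequalities from the Courant--Fischer min--max characterization of the eigenvalues of a real symmetric matrix. The first step is to realize the principal submatrix $N$ as the compression of $M$ to a coordinate subspace. Suppose $N$ is obtained by keeping the rows and columns of $M$ indexed by a set $\alpha\subseteq\{1,\dots,m\}$ with $|\alpha|=n$, and set $W=\{x\in\mathbb{R}^m : x_j=0 \text{ for all } j\notin\alpha\}$, a subspace of dimension $n$. The natural isometry $\mathbb{R}^n\to W$ that places a vector into the coordinates indexed by $\alpha$ satisfies $y^{\prime}Ny=x^{\prime}Mx$ and $\|y\|=\|x\|$ whenever $x\in W$ is the image of $y\in\mathbb{R}^n$; this is immediate from the definition of a principal submatrix. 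Consequently the Rayleigh quotient of $N$ on $\mathbb{R}^n$ coincides with the Rayleigh quotient of $M$ restricted to $W$, which is exactly what lets me compare the two spectra.

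For the upper bound I would use the max--min form of Courant--Fischer,
$$\lambda_i=\max_{\substack{S\subseteq\mathbb{R}^m\\ \dim S=i}}\ \min_{0\neq x\in S}\frac{x^{\prime}Mx}{x^{\prime}x},\qquad \mu_i=\max_{\substack{S\subseteq W\\ \dim S=i}}\ \min_{0\neq x\in S}\frac{x^{\prime}Mx}{x^{\prime}x}.$$
Since every $i$-dimensional subspace of $W$ is in particular an $i$-dimensional subspace of $\mathbb{R}^m$, the feasible set in the expression for $\mu_i$ is contained in that for $\lambda_i$; maximizing over a smaller collection can only decrease the value, so $\mu_i\leq\lambda_i$.

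For the lower bound I would use the dual min--max form,
$$\lambda_{m-n+i}=\min_{\substack{T\subseteq\mathbb{R}^m\\ \dim T=n-i+1}}\ \max_{0\neq x\in T}\frac{x^{\prime}Mx}{x^{\prime}x},\qquad \mu_i=\min_{\substack{S\subseteq W\\ \dim S=n-i+1}}\ \max_{0\neq x\in S}\frac{x^{\prime}Mx}{x^{\prime}x},$$
where I have used the index identity $m-(m-n+i)+1=n-i+1$. The subspaces $S\subseteq W$ of dimension $n-i+1$ again form a sub-collection of all such subspaces of $\mathbb{R}^m$, and minimizing over a smaller collection can only increase the value, giving $\mu_i\geq\lambda_{m-n+i}$. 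Combining the two bounds yields $\lambda_{m-n+i}\leq\mu_i\leq\lambda_i$ for $1\leq i\leq n$.

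I do not expect a serious obstacle here; the only care needed is the bookkeeping of matching the subspace dimension $n-i+1$ to the correct index $m-n+i$ in the min--max formula. As an alternative route, one could first treat the single-deletion case $n=m-1$, where the classical two-sided interlacing $\lambda_{i+1}\leq\mu_i\leq\lambda_i$ follows from the same min--max argument (or from the sign behaviour of the characteristic polynomial of $M$ relative to that of $N$), and then iterate the deletion $m-n$ times. At each stage the lower index shifts by one, so after all deletions the cumulative shift is $m-n$, again reproducing $\lambda_{m-n+i}\leq\mu_i\leq\lambda_i$; the telescoping of the shifts is the only point requiring attention in that approach.
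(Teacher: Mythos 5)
Your proposal is correct: both inequalities follow exactly as you argue from the two forms of Courant--Fischer, and the index bookkeeping $m-(m-n+i)+1=n-i+1$ is right. Note, however, that the paper does not prove this statement at all --- it is quoted as a known preliminary (Cauchy's interlacing theorem, cited from Horn and Johnson), and your argument is essentially the standard proof given in that reference, so there is nothing in the paper to diverge from.
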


Let $P_n$ and $K_{1,n-1}$ denote the path and the star graphs on $n$ vertices, respectively. The $\mathcal{E}$-spectrum of $K_{1,n-1}$ is given in the following lemma.
\begin{thm}[{\cite[Theorem 2.1]{mahato2020spectra}}]\label{star-spec}
Let $K_{1,n-1}$ be the star on $n$ vertices. Then, the $\mathcal{E}$-spectrum of $K_{1,n-1}$ is given by
\[\Spec_{\mathcal{E}}(K_{1,n-1})=
    \left\{ {\begin{array}{ccc}
        n-2+\sqrt{n^2-3n+3}  & n-2-\sqrt{n^2-3n+3} & -2  \\
        1 & 1 & n-2 \\
        \end{array} } \right\},\]
\end{thm}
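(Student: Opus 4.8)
The plan is to write down $\mathcal{E}(K_{1,n-1})$ explicitly and then diagonalise it by exploiting the symmetry among the leaves. Label the center by $v_0$ and the leaves by $v_1,\dots,v_{n-1}$. For $n\ge 3$ the center has eccentricity $e(v_0)=1$, while every leaf has eccentricity $2$. Checking the defining rule: the center-leaf distance is $1=\min\{1,2\}$, so each such entry is retained, and the leaf-leaf distance is $2=\min\{2,2\}$, so those entries are retained as well. Hence, ordering the center first,
\[
\mathcal{E}(K_{1,n-1})=\begin{pmatrix} 0 & \mathbf{1}^{\prime} \\ \mathbf{1} & 2(J-I)\end{pmatrix},
\]
where $\mathbf{1}$ is the all-ones vector of length $n-1$ and $J,I$ are of order $n-1$.

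Next I would diagonalise using the partition $\{v_0\}\cup\{v_1,\dots,v_{n-1}\}$. First, consider vectors of the form $(0,y)$ with $\mathbf{1}^{\prime}y=0$; there is an $(n-2)$-dimensional space of these. Applying $\mathcal{E}(K_{1,n-1})$ yields center component $\mathbf{1}^{\prime}y=0$ and leaf component $2(J-I)y=-2y$, so each such vector is an eigenvector for the eigenvalue $-2$. This accounts for $-2$ with multiplicity $n-2$.

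The remaining two eigenvalues live on the $2$-dimensional symmetric subspace spanned by $(1,\mathbf{0})$ and $(0,\mathbf{1})$, which is invariant. On this subspace $\mathcal{E}(K_{1,n-1})$ acts by the $2\times 2$ matrix
\[
M=\begin{pmatrix} 0 & n-1 \\ 1 & 2(n-2)\end{pmatrix},
\]
obtained from $\mathbf{1}^{\prime}(b\mathbf{1})=(n-1)b$ and $a\mathbf{1}+2(J-I)(b\mathbf{1})=\big(a+2(n-2)b\big)\mathbf{1}$. Its characteristic polynomial is $\lambda^2-(2n-4)\lambda-(n-1)$, whose discriminant simplifies to $4(n^2-3n+3)$, giving the eigenvalues $n-2\pm\sqrt{n^2-3n+3}$. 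Together with the $(n-2)$-fold eigenvalue $-2$, these are $n$ eigenvalues in all, which exhausts the spectrum and matches the claimed $\mathcal{E}$-spectrum.

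There is no serious obstacle here; the result follows from a direct computation once the eccentricity matrix is correctly identified. The only points requiring care are verifying that every center-leaf and leaf-leaf distance entry is actually retained under the $\min\{e(u),e(v)\}$ rule (which hinges on the eccentricity values $1$ and $2$), and correctly simplifying the discriminant $(2n-4)^2+4(n-1)=4(n^2-3n+3)$ in the final quadratic.
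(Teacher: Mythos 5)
Your proof is correct. Note, however, that the paper you are comparing against does not prove this statement at all: it is imported as a known result, cited from an earlier paper of the same authors, so there is no in-paper argument to match. Your computation supplies exactly the kind of verification that citation suppresses. The key steps all check out: for $n\geq 3$ the center has eccentricity $1$ and each leaf has eccentricity $2$, so every center--leaf entry $1$ and leaf--leaf entry $2$ survives the $\min\{e(u),e(v)\}$ rule, giving $\mathcal{E}(K_{1,n-1})=\left(\begin{smallmatrix} 0 & \mathbf{1}^{\prime} \\ \mathbf{1} & 2(J-I)\end{smallmatrix}\right)$ (which also shows $\mathcal{E}(K_{1,n-1})=D(K_{1,n-1})$, the fact the paper uses elsewhere); the $(n-2)$-dimensional space of vectors $(0,y)$ with $\mathbf{1}^{\prime}y=0$ gives the eigenvalue $-2$; and the invariant two-dimensional symmetric subspace yields the quadratic $\lambda^2-2(n-2)\lambda-(n-1)$ with roots $n-2\pm\sqrt{n^2-3n+3}$, since $(2n-4)^2+4(n-1)=4(n^2-3n+3)$. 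The dimension count $(n-2)+2=n$ closes the argument, and neither root of the quadratic equals $-2$ for $n\geq 3$, so the stated multiplicities are right. Two tiny remarks: your $2\times 2$ matrix $M$ is non-symmetric only because the basis $(1,\mathbf{0})$, $(0,\mathbf{1})$ is not normalized, which is harmless since eigenvalues of a restriction to an invariant subspace do not depend on the choice of basis; and your argument tacitly assumes $n\geq 3$ (for $n=2$ both vertices have eccentricity $1$), which is consistent with the theorem's use in the paper.
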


An $n \times n$ nonnegative matrix $A $ is  \emph{reducible} if there exists an $n \times n$ permutation matrix $Q$ such that $QAQ^T =
\begin{pmatrix}
A_{11} & A_{12}  \\
0  & A_{22}
\end{pmatrix}$, where $A_{11}$ is a $r \times r$ sub matrix  with $1 \leq r < n$. If no such permutation matrix $Q$ exists, then $A$ is \emph{irreducible}.

\begin{thm}[{\cite[Theorem 2.1]{ecc-main}}]\label{ecc irreducible}
The eccentricity matrix $\mathcal{E}(T)$ of a tree with at least two vertices is irreducible.
\end{thm}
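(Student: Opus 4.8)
The plan is to turn irreducibility into a connectivity statement and then exploit the tree-specific fact that farthest vertices are diameter endpoints. Since $\mathcal{E}(T)$ is a symmetric nonnegative matrix, it is irreducible if and only if the undirected graph $G_{\mathcal E}$ on $V(T)$ is connected, where we join $u,v$ (and write $u\sim_{\mathcal E}v$) precisely when $\mathcal{E}(T)_{uv}\neq 0$. Because $d(u,v)\le e(u)$ and $d(u,v)\le e(v)$ always hold, the defining condition $d(u,v)=\min\{e(u),e(v)\}$ simply says that the vertex of smaller eccentricity among $u,v$ attains its eccentricity at the other; so the whole problem reduces to proving that $G_{\mathcal E}$ is connected.

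First I would record two standard facts about a tree $T$ of diameter $d$: (i) a farthest vertex from any vertex is an endpoint of some diametral path, and the diametral endpoints are exactly the vertices of maximum eccentricity $d$; and (ii) every diametral path passes through the center of $T$ (a single central vertex $c$ if $d$ is even, a central edge $\{c_1,c_2\}$ if $d$ is odd), sitting at the midpoint. Using (i), for an arbitrary vertex $u$ pick a farthest vertex $w$; then $w$ is a diametral endpoint, so $e(w)=d\ge e(u)$ and $d(u,w)=e(u)=\min\{e(u),e(w)\}$, giving $u\sim_{\mathcal E}w$. Hence every vertex of $T$ is $\mathcal E$-adjacent to some diametral endpoint, and it remains only to show that all diametral endpoints lie in one component of $G_{\mathcal E}$.

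For even $d$ this is immediate from (ii): if $p$ is any diametral endpoint, then $c$ is the midpoint of a diameter through $p$, so $d(c,p)=d/2=e(c)\le e(p)$ and therefore $c\sim_{\mathcal E}p$; the center is $\mathcal E$-adjacent to every diametral endpoint, and $G_{\mathcal E}$ is connected. The odd case is the step I expect to be the main obstacle, since the two central vertices $c_1,c_2$ are not themselves $\mathcal E$-adjacent once $d\ge 3$ (their distance is $1$, far below $\min\{e(c_1),e(c_2)\}$). Here a short midpoint computation shows that a diametral endpoint $p$ lying on the $c_1$-side of the central edge satisfies $d(p,c_2)=(d+1)/2=e(c_2)$, whence $p\sim_{\mathcal E}c_2$, and symmetrically every endpoint on the $c_2$-side is $\mathcal E$-adjacent to $c_1$. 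Finally, antipodal diametral endpoints $p,q$ satisfy $d(p,q)=d=\min\{e(p),e(q)\}$ and so $p\sim_{\mathcal E}q$; taking $p$ on the $c_1$-side and its antipode $q$ on the $c_2$-side, the path $c_1\sim_{\mathcal E}q\sim_{\mathcal E}p\sim_{\mathcal E}c_2$ links the two central vertices and places all diametral endpoints in one component. Combined with the previous paragraph, this shows $G_{\mathcal E}$ is connected, so $\mathcal{E}(T)$ is irreducible.
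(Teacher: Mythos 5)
Your proof is correct, but note that the paper you are being compared against does not actually prove this statement at all: Theorem \ref{ecc irreducible} is imported verbatim from \cite{ecc-main} (Wang et al.) as a known result, so there is no in-paper argument to match. Judged on its own, your argument is sound and self-contained. The reduction of irreducibility to connectivity of the graph $G_{\mathcal E}$ is exactly right for a symmetric nonnegative matrix, and the two tree facts you invoke --- that a farthest vertex from any vertex is a diametral endpoint (the correctness of the two-sweep BFS diameter algorithm), and that every diametral path passes through the center, which sits at its midpoint (Jordan's center theorem) --- are genuinely standard, though you should cite or prove them in a final write-up since they carry the whole weight of the argument; notably, the first fact is false for general graphs, which is precisely why eccentricity matrices of arbitrary connected graphs can be reducible, so your proof correctly isolates where tree structure enters. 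The case analysis is also handled carefully: you verified the distances $d(p,c_2)=(d+1)/2=e(c_2)$ in the odd case rather than assuming the two central vertices are $\mathcal E$-adjacent (they are not once $d\ge 3$), and the connecting chain $c_1\sim_{\mathcal E}q\sim_{\mathcal E}p\sim_{\mathcal E}c_2$ through an antipodal pair correctly stitches the two sides together. One can check the degenerate cases ($d=1$, $d=2$) also pass through your argument without modification.
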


For $d\geq 3$ and $d$ is odd, let $T_{n,d}^{a,b}$ be the tree obtained from $P_{d+1}=v_0v_1v_2\hdots v_d$ by attaching $a$ pendant vertices  to $v_{\frac{d-1}{2}}$ and $b$ pendent vertices to $v_{\frac{d+1}{2}}$, where $a+b=n-d-1$ and $b\geq a\geq 0$. The following theorem gives the characterization of trees with minimum $\mathcal{E}$-spectral radius.

\begin{thm}[{\cite[Theorem 2.13]{wei2020solutions}}]\label{min-gen}
Let $T$ be a tree on $n$ vertices.
\begin{enumerate}
    \item If $4 \leq n\leq 15$, then
    $$\xi_1(T)\geq \sqrt{\frac{13n-35+\sqrt{(13n-35)^2-64(n-3)}}{2}} $$ with equality if and only if $T\cong T_{n,3}^{0,n-4}$.
    \item If $n\geq 16$, then $$\xi_1(T)\geq
    \begin{cases}
\text{$\sqrt{\frac{16n-21+\sqrt{800n-1419}}{2}}$,} & \quad\text{if $n$ is odd;}\\
\text{$\sqrt{\frac{16n-21+5\sqrt{32n-67}}{2}}$,} & \quad \text{if $n$ is even.}
\end{cases} $$
Each of the equalities holds if and only if $T\cong T_{n,5}^{\lfloor \frac{n-6}{2}\rfloor,\lceil \frac{n-6}{2}\rceil}$.
\end{enumerate}

\end{thm}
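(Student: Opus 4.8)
The plan is to combine Perron--Frobenius theory with an explicit symmetry reduction on the two candidate families and a graft-transformation argument that eliminates every competitor. Since $\mathcal{E}(T)$ is entrywise nonnegative and, by Theorem \ref{ecc irreducible}, irreducible, the $\mathcal{E}$-spectral radius $\xi_1(T)=\rho(\mathcal{E}(T))$ is a simple eigenvalue with a strictly positive eigenvector, and it is this quantity we must minimize over all $n$-vertex trees. I would first compute $\xi_1$ exactly on the candidates $T_{n,3}^{0,n-4}$ and $T_{n,5}^{\lfloor (n-6)/2\rfloor,\lceil (n-6)/2\rceil}$. In each candidate the pendant vertices hung at a common vertex are mutually interchangeable, so every eigenvector splits into a symmetric part (constant on each block of pendants) and an antisymmetric part (summing to zero on each block). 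A direct check shows the antisymmetric part lies in the kernel of $\mathcal{E}$, accounting for the bulk of the $n-4$ zero eigenvalues, while the symmetric part is governed by a small quotient matrix obtained by collapsing each pendant block to a single weighted vertex. The characteristic polynomial of that quotient is a low-degree polynomial in $\lambda$ whose largest root is exactly the radical expression recorded in the statement.

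For the lower bound I would exploit the inertia and spectral-symmetry results established in this paper. A tree of odd diameter has inertia $(2,2,n-4)$ and a spectrum symmetric about the origin, so its only nonzero $\mathcal{E}$-eigenvalues are $\pm\xi_1,\pm\xi_2$, whence $2(\xi_1^2+\xi_2^2)=\operatorname{trace}\bigl(\mathcal{E}(T)^2\bigr)=\sum_{u,v}\mathcal{E}(T)_{uv}^2$. This identity converts the problem of bounding $\xi_1$ into the combinatorial problem of controlling the total squared eccentricity weight and its split between the two positive eigenvalues, which is far more tractable than the raw eigenvalue problem. The complementary elementary bound $\xi_1(T)\ge \mathbf{1}'\mathcal{E}(T)\mathbf{1}/n$ from the Rayleigh quotient, together with interlacing (Theorem \ref{interlace}) applied to well-chosen principal submatrices, supplies the remaining estimates needed to separate the candidates from their rivals.

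The heart of the argument, and the step I expect to be the main obstacle, is to prove that every tree other than the claimed minimizer has strictly larger $\xi_1$. Unlike the adjacency or distance matrix, the eccentricity matrix is \emph{not} monotone under local edge operations: relocating even a single pendant can alter eccentricities and thereby rewrite the entire zero/nonzero pattern of $\mathcal{E}$, so the usual Perron-root perturbation inequalities do not apply. My plan is to isolate a finite repertoire of graft transformations whose effect on the eccentricity profile is transparent---typically moving a branch between the two central vertices of a diametrical path, where the eccentricities of all vertices are stable---and for each such $T\mapsto T'$ to compare $\mathcal{E}(T)$ and $\mathcal{E}(T')$ directly. Testing the difference against the Perron vector $\mathbf{x}$ via $\xi_1(T)\ge \mathbf{x}'\mathcal{E}(T)\mathbf{x}/\mathbf{x}'\mathbf{x}$ and carrying out a sign analysis of $\mathbf{x}'\!\left(\mathcal{E}(T)-\mathcal{E}(T')\right)\!\mathbf{x}$ should force $\xi_1$ strictly down unless $T$ is already the extremal tree of its diameter. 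Iterating collapses an arbitrary tree to the optimal tree of its own diameter.

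It then remains to optimize over the diameter. Large diameters fall quickly: the two ends of a diametrical path sit at mutual distance $d=\diam(T)$, and the corresponding $2\times2$ principal submatrix of $\mathcal{E}(T)$ forces $\xi_1\ge d$ by interlacing, so every diameter exceeding the $O(\sqrt{n})$ scale of the candidates is excluded outright. The star ($\diam=2$) is likewise dismissed, since Theorem \ref{star-spec} gives $\xi_1(K_{1,n-1})=n-2+\sqrt{n^2-3n+3}\sim 2n$. For the remaining moderate diameters one reapplies the symmetry reduction of the first paragraph to the per-diameter minimizer---again a path with pendants massed at its center---obtaining $\xi_1$ as a root of an explicit polynomial in $n$ and $d$; the odd-diameter identity above keeps these expressions manageable and shows the odd diameters beat their even neighbors. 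Comparing the finitely many survivors, chiefly $T_{n,3}^{0,n-4}$ against $T_{n,5}^{\lfloor (n-6)/2\rfloor,\lceil (n-6)/2\rceil}$, reduces to inequalities between the radicals in the statement, and solving them locates the crossover at $n=15$ versus $n\ge 16$. The delicate point throughout is the non-monotonicity just noted: the whole scheme succeeds only because the graft transformations can be chosen so that the induced change in the eccentricity pattern, and hence in $\mathcal{E}$, is explicit rather than uncontrolled.
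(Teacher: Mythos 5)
A preliminary remark before the review itself: the paper does not prove this statement at all --- it is imported verbatim as a preliminary from Wei et al.\ \cite[Theorem 2.13]{wei2020solutions}, so there is no in-paper proof to compare yours against, and your proposal must stand on its own. It does not: it is a research plan whose decisive steps are announced rather than executed. The parts you actually carry out are correct but are the easy parts --- the equitable-partition computation of $\xi_1$ on the two candidate trees (the antisymmetric vectors on pendant blocks do lie in the kernel, since two pendants at a common vertex have identical rows in $\mathcal{E}$), the interlacing bound $\xi_1(T)\geq \diam(T)$ coming from the $2\times 2$ submatrix on a diametrical pair, which excludes diameters beyond the $\sqrt{8n}$ scale, and the dismissal of the star via Theorem \ref{star-spec}. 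Using this paper's inertia and symmetry results (Theorem \ref{Inertia-odd}, Theorem \ref{symmetry-odd}) is not logically circular, since those do not rely on Theorem \ref{min-gen}, but it is anachronistic for a result the paper takes from prior literature, and in any case those identities only apply to odd-diameter trees.

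The genuine gap is precisely the step you yourself flag as ``the main obstacle'': showing that every tree other than the claimed extremal one has strictly larger $\xi_1$. Your plan --- find graft transformations whose effect on the eccentricity pattern is transparent, then do a sign analysis of $\mathbf{x}'\bigl(\mathcal{E}(T)-\mathcal{E}(T')\bigr)\mathbf{x}$ against the Perron vector --- is never instantiated: no repertoire of transformations is exhibited, no sign analysis is performed, and since (as you correctly note) moving a branch can rewrite the entire zero/nonzero pattern of $\mathcal{E}$, there is no a priori reason the quadratic-form difference has a definite sign; asserting that it ``should force $\xi_1$ strictly down'' is hope, not proof. The cross-diameter step has the same status: for every diameter between $3$ and $O(\sqrt{n})$ you would need to identify the per-diameter minimizer (you assert it is a path with pendants massed at its center, without argument) and then prove the inequalities between the resulting radicals, including the delicate crossover between $T_{n,3}^{0,n-4}$ at $n\leq 15$ and $T_{n,5}^{\lfloor (n-6)/2\rfloor,\lceil (n-6)/2\rceil}$ at $n\geq 16$; none of these comparisons is carried out. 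Finally, the trace identity $2(\xi_1^2+\xi_2^2)=\sum_{u,v}\mathcal{E}(T)_{uv}^2$ by itself only yields $\xi_1\geq \frac{1}{2}\bigl(\sum_{u,v}\mathcal{E}(T)_{uv}^2\bigr)^{1/2}$, and you nowhere show this is strong enough to separate competitors from the candidates. As it stands, the proposal is an outline of how one might attack the theorem, not a proof of it.
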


\section{Inertia of eccentricity matrices of trees}\label{sec3}
It is well known that the distance matrix $D(T)$ of a tree $T$ on $n$ vertices has exactly one positive eigenvalue and $n-1$ negative eigenvalues. Therefore,  $\rank(D(T))=n$ for any tree $T$ on $n$ vertices. In \cite{mahato2021spectral}, the authors proved that the star $K_{1,n-1}$ is the only tree of order $n$ for which the eccentricity matrix is invertible, that is,  $\rank(\mathcal{E}(K_{1,n-1}))=n$. Moreover, the inertia of $\mathcal{E}(K_{1,n-1})$ is $(1,n-1,0)$, as $D(K_{1,n-1})=\mathcal{E}(K_{1,n-1})$. In this section, we find the inertia of eccentricity matrices of all trees on $n$ vertices. First, let us start with computing the inertia of a block matrix.

\begin{lem}\label{inertia-B}
Let $B$ be a $2n \times 2n$ symmetric matrix partitioned as
\[B=
\left( {\begin{array}{cc}
    2d(J-I) & (2d-1)(J-I) \\
    (2d-1)(J-I) & \textbf{0} \\
    \end{array} } \right).\]
Then, the inertia of $B$ is $(n,n,0)$.
\end{lem}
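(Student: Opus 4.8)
The plan is to exploit the $n\times n$ block structure already present in $B$ together with the Haynsworth inertia additivity formula (Theorem~\ref{Hyansworth}). Set $M:=J-I$, the $n\times n$ all-ones-minus-identity matrix, so that
\[
B=\left(\begin{array}{cc} 2dM & (2d-1)M \\ (2d-1)M & \mathbf{0}\end{array}\right).
\]
The first thing I would record is the spectrum of $M$: since $J$ has eigenvalues $n$ (simple, with eigenvector $\mathbf{1}$) and $0$ with multiplicity $n-1$, the matrix $M=J-I$ has the simple eigenvalue $n-1$ and the eigenvalue $-1$ with multiplicity $n-1$. In particular $M$ is nonsingular (for $n\ge 2$) with $In(M)=(1,n-1,0)$, and consequently $In(-M)=(n-1,1,0)$.

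Next I would take $H_{11}=2dM$ as the leading block. Because $2d>0$ and $M$ is nonsingular, $H_{11}$ is nonsingular and $In(H_{11})=In(M)=(1,n-1,0)$. Haynsworth's formula then gives $In(B)=In(H_{11})+In(B/H_{11})$, so the whole computation reduces to identifying the Schur complement. A direct substitution, using $(2dM)^{-1}=\tfrac{1}{2d}M^{-1}$, yields
\[
B/H_{11}=\mathbf{0}-(2d-1)M\,(2dM)^{-1}\,(2d-1)M=-\frac{(2d-1)^2}{2d}\,M,
\]
where the product $MM^{-1}M=M$ collapses the expression to a negative scalar multiple of $M$. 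Since $-(2d-1)^2/(2d)<0$ for $d\ge 1$, this gives $In(B/H_{11})=In(-M)=(n-1,1,0)$. Adding the two contributions,
\[
In(B)=In(H_{11})+In(B/H_{11})=(1,n-1,0)+(n-1,1,0)=(n,n,0),
\]
as claimed.

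There is no genuinely hard obstacle here; the only points needing care are verifying the nonsingularity of $M$ (equivalently that $n\ge 2$, which is implicit since $B$ degenerates to the zero matrix when $n=1$) and keeping the signs straight when passing from $M$ to its negative multiple in the Schur complement. As an alternative that bypasses the Schur complement entirely, one could simultaneously block-diagonalize: for each eigenvector $u$ of $M$ with eigenvalue $\mu$, the plane $\mathrm{span}\{(u,0),(0,u)\}$ is $B$-invariant, and $B$ restricts there to $\left(\begin{smallmatrix} 2d\mu & (2d-1)\mu \\ (2d-1)\mu & 0\end{smallmatrix}\right)$, whose determinant $-(2d-1)^2\mu^2<0$ forces exactly one positive and one negative eigenvalue; summing over the $n$ nonzero eigenvalues of $M$ again produces $(n,n,0)$.
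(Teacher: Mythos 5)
Your proof is correct and follows essentially the same route as the paper: both apply the Haynsworth inertia additivity formula with $2d(J-I)$ as the leading block, compute the Schur complement as $-\tfrac{(2d-1)^2}{2d}(J-I)$, and add the inertias $(1,n-1,0)+(n-1,1,0)$. Your extra details (the explicit spectrum of $J-I$ justifying its nonsingularity and inertia, plus the alternative block-diagonalization sketch) only supplement what is the same argument.
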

\begin{proof}
Let $B_1=2d(J-I)$ and $B_2=(2d-1)(J-I)$. Then
$B= \left( {\begin{array}{cc}
    B_1 & B_2\\
    B_2 & \textbf{0} \\
\end{array} } \right).$
Since $B$ is a symmetric matrix and $B_1$ is invertible, by the Haynsworth inertia additivity formula, we have $In(B)=In(B_1)+In(0-B_2{B_1}^{-1}B_2)$. It is easy to see that the inertia of $B_1$ is $(1,n-1,0)$. We have $In(0-B_2{B_1}^{-1}B_2)=(n-1,1,0)$, because $B_2{B_1}^{-1}B_2=\frac{(2d-1)^2}{2d}(J-I)$. Thus $In(B)=(1,n-1,0)+(n-1,1,0)=(n,n,0)$.
\end{proof}

In the next theorem, we show that the rank and the inertia of the eccentricity matrices of trees with odd diameter do not depend on the structure of trees. For $U \subseteq V(T) $, let $R[U]$  denote the submatrix of $\mathcal{E}(T)$ with the rows are indexed by the elements of $U$.

\begin{thm}\label{Inertia-odd}
Let $T$ be a tree on $n\geq 4$ vertices with $\diam(T)=2d+1$, $d\in \mathbb{N}$. Then, the rank of $\mathcal{E}(T)$ is $4$. Moreover, $\mathcal{E}(T)$ has exactly two positive and two negative eigenvalues, that is, the inertia of $\mathcal{E}(T)$ is $(2,2,n-4)$.
\end{thm}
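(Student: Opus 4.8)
The plan is to exploit the fact that a tree of odd diameter has a central \emph{edge}, and to show that this forces the eccentricity matrix into an anti-block (off-diagonal) form of rank four. First I would fix the central edge $\{c_1,c_2\}$ of $T$ and let $T_1,T_2$ be the two components of $T-c_1c_2$, with $c_i\in T_i$. Since $\diam(T)=2d+1$, each $T_i$ has depth exactly $d$ measured from $c_i$; write $a(x)=d(x,c_1)$ for $x\in T_1$ and $b(y)=d(y,c_2)$ for $y\in T_2$, and let $D_1=\{x\in T_1: a(x)=d\}$, $D_2=\{y\in T_2: b(y)=d\}$ be the peripheral vertices on each side. The structural heart of the argument is the claim that the farthest vertices from any $x\in T_1$ are exactly the vertices of $D_2$, and symmetrically. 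This I would prove by the triangle inequality: for $y\in T_1$ one has $d(x,y)\le a(x)+d(c_1,y)\le a(x)+d$, whereas every $y\in D_2$ satisfies $d(x,y)=a(x)+1+d$; hence $e(x)=a(x)+1+d$, realized precisely on $D_2$.

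From this the block structure follows immediately. For $x,y$ both in $T_1$ one checks $d(x,y)\le a(x)+a(y)\le d+\min(a(x),a(y))<\min(a(x),a(y))+d+1=\min(e(x),e(y))$, so every within-side entry vanishes and $\mathcal{E}(T)=\left(\begin{smallmatrix}0&M\\M^{\prime}&0\end{smallmatrix}\right)$ with $M$ the $T_1\times T_2$ cross block. A short computation using $d(x,y)=a(x)+1+b(y)$ shows that $M_{xy}\ne 0$ exactly when $\max(a(x),b(y))=d$, i.e.\ when $x\in D_1$ or $y\in D_2$, in which case $M_{xy}=a(x)+1+b(y)$. Ordering rows as $[D_1,\,T_1\setminus D_1]$ and columns as $[D_2,\,T_2\setminus D_2]$, the matrix $M$ breaks into a $(2d+1)J$ block, a block all of whose rows equal the vector $\big((d+1)+b(y)\big)_y$, a block all of whose columns equal $\big(a(x)+(d+1)\big)_x$, and a zero block.

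Finally I would read off the rank. All rows of $M$ indexed by $D_1$ coincide, and all rows indexed by $T_1\setminus D_1$ are scalar multiples of $[\mathbf{1}_{D_2}^{\prime},\,\mathbf{0}^{\prime}]$; these two row types are independent because the $(T_2\setminus D_2)$-part of the first is the strictly positive vector $\big((d+1)+b(y)\big)_y$ while that of the second is zero. Since $c_i\in T_i\setminus D_i$ and $D_i\ne\emptyset$ (as $d\ge 1$ and each side has depth $d$), all four index blocks are nonempty, so $\rank(M)=2$. An anti-block symmetric matrix $\left(\begin{smallmatrix}0&M\\M^{\prime}&0\end{smallmatrix}\right)$ has eigenvalues $\pm\sigma_i(M)$ together with zeros, so $\rank(\mathcal{E}(T))=2\rank(M)=4$ and its nonzero eigenvalues split evenly in sign, giving $In(\mathcal{E}(T))=(2,2,n-4)$. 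I expect the main obstacle to be the structural claim that every vertex's farthest points lie entirely on the opposite side of the central edge; once that is in hand, the vanishing of the diagonal blocks and the rank-two shape of $M$ are routine linear algebra.
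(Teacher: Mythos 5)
Your proof is correct, and while its structural core coincides with the paper's, the endgame is genuinely different. The decomposition itself is the same: your $D_1$, $D_2$, $T_1\setminus D_1$, $T_2\setminus D_2$ are exactly the paper's $V_1$, $V_2$, $V_3$, $V_4$, and your verification that the farthest vertices from any $x\in T_1$ are precisely $D_2$ (giving $e(x)=a(x)+1+d$) is the computation the paper dismisses as ``easy to check''; both arguments then read off the same block pattern for $\mathcal{E}(T)$. The divergence is in how the inertia is extracted. The paper bounds $\rank(\mathcal{E}(T))\le 4$ via row dependencies, exhibits a $4\times 4$ principal submatrix $A$ (one vertex from each class), computes its four eigenvalues explicitly to see signs $+,+,-,-$, and invokes Cauchy interlacing to force at least two eigenvalues of each sign. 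You instead exploit the anti-block form $\left(\begin{smallmatrix}0 & M\\ M^{\prime} & 0\end{smallmatrix}\right)$ directly: you compute $\rank(M)=2$ exactly, and use the standard fact that the nonzero eigenvalues of such a symmetric matrix are $\pm\sigma_i(M)$, so the nonzero spectrum splits evenly in sign with $n_{+}=n_{-}=\rank(M)=2$. Your route avoids both the explicit $4\times 4$ eigenvalue computation and interlacing, and it yields as a free byproduct the symmetry of the $\mathcal{E}$-spectrum about the origin, which the paper proves separately as Theorem \ref{symmetry-odd} using precisely this anti-block observation. What the paper's route buys in exchange is a template that transfers to the even-diameter case: there $\mathcal{E}(T)$ is not anti-block, so no singular-value symmetry is available, and the paper's strategy of interlacing against a distinguished principal submatrix (the matrix $B$ of Lemma \ref{inertia-B}) is what carries Theorem \ref{inertia-even}. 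One small point of rigor common to both: your assertion that each $T_i$ has depth exactly $d$ from $c_i$ (hence $D_i\neq\emptyset$) deserves the one-line justification that the central edge is the middle edge of every diametrical path; the paper is equally terse on this.
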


\begin{proof}
Let $T$ be a tree on $n\geq 4$ vertices with $\diam(T)=2d+1$, $d\in \mathbb{N}$. Since the diameter of $T$ is odd, $T$ has two centers, say, $u_0$ and $v_0$. Let $T_1$ and $T_2$ be the components of $T-\{u_0,v_0\}$ containing $u_0$ and $v_0$, respectively. Define
\begin{eqnarray*}
V_1 &=& \{ u\in T_1: d(u,u_0)=d\},\\
V_2 &=& \{ v\in T_2: d(v,v_0)=d\},\\
V_3 &=& \{ u\in T_1: 0\leq d(u,u_0)<d\}, \mbox{and}\\
V_4 &=& \{ v\in T_2: 0\leq d(v,v_0)<d\}.
\end{eqnarray*}

\noindent Then $V(T)=V_1 \cup V_2 \cup V_3 \cup V_4$, and $\{ V_1, V_2, V_3, V_4\}$ is a partition of $V(T)$.
It is easy to check the following
\begin{align*}
 \mathcal{E}(T)_{uv} &=2d+1,\quad \text{for all} \quad u\in V_1, v\in V_2, \\
 \mathcal{E}(T)_{uv} &=e(u), \quad \text{for all} \quad u\in V_4, v\in V_1,\\
 \mathcal{E}(T)_{uv} &=e(v), \quad \text{for all} \quad u\in V_2, v\in V_3,\\
 \mathcal{E}(T)_{uv} &=0,\quad \text{for all} \quad u,v\in V_i, i=1,2,3,4,\\
 \mathcal{E}(T)_{uv} &=0,\quad \text{for all} \quad u\in V_3, v\in V_1, \mbox{and}\\
 \mathcal{E}(T)_{uv} &=0,\quad \text{for all} \quad u\in V_2, v\in V_4.
\end{align*}

So corresponding to the above partition of $V(T)$, the eccentricity matrix $\mathcal{E}(T)$ of $T$ can be written as
\[ \mathcal{E}(T)=
\begin{blockarray}{ccccc}
& V_1 & V_2 & V_3 & V_4 \\
\begin{block}{c(cccc)}
 V_1 & \textbf{0} & (2d+1)J & \textbf{0} & P\\
 V_2 & (2d+1)J & \textbf{0} & Q & \textbf{0}\\
 V_3 & \textbf{0} & Q^{\prime} & \textbf{0} & \textbf{0}\\
 V_4 & P^{\prime} & \textbf{0} & \textbf{0} & \textbf{0}\\
\end{block}
\end{blockarray},\]
for some matrices $P$ and $Q$ of appropriate order. Note that, all the rows of $P$ are identical, and all the rows of $Q$ are identical.

For  $i=1,2$, the rows corresponding to every vertex in $V_i$ are identical in $\mathcal{E}(T)$, so  $\rank (R[V_1]) = \rank (R[V_2])=1$. Let $v\in V_3$. Then for every vertex $u \in V_3$,  there is  a scalar $c\in \mathbb{R}$ such that $R[v]=cR[u]$. Thus  $\rank(R[V_{3}])=1$. By the same argument, we get $\rank(R[V_4]) = 1$. Thus, the rank of $\mathcal{E}(T)$ is at most 4.

Let $v_i\in V_i$ for $i=1,2,3,4$. Then, the principal submatrix of $\mathcal{E}(T)$ indexed by $\{v_1,v_2,v_3,v_4\}$ is
\[A = \left( {\begin{array}{cccc}
0 & 2d+1 & 0 & 2d\\
2d+1 & 0 & 2d & 0 \\
0 & 2d & 0 & 0 \\
2d & 0 & 0 & 0\\
\end{array} } \right)\]
Since the rank of $A$ is $4$, $\rank(\mathcal{E}(T))\geq 4$. Thus $\rank(\mathcal{E}(T))=4$.

The eigenvalues of $A$ are $\frac{2d+1+\sqrt{(2d+1)^2+16d^2}}{2}$, $\frac{\sqrt{(2d+1)^2+16d^2}-(2d+1)}{2}$, $-\frac{\sqrt{(2d+1)^2+16d^2}-(2d+1)}{2}$, and  $-\frac{2d+1+\sqrt{(2d+1)^2+16d^2}}{2}$. That is, $A$ has two positive and two negative eigenvalues. Therefore, by Theorem \ref{interlace},  $\mathcal{E}(T)$ has at least two positive and two negative eigenvalues. But the rank of $\mathcal{E}(T)$ is $4$, hence $\mathcal{E}(T)$ has exactly two positive and two negative eigenvalues. Thus, the inertia of $\mathcal{E}(T)$ is $(2,2,n-4)$.
\end{proof}

\begin{dfn}\label{dia-distin}{\rm
    Let $T$ be a tree with  even diameter, and $u_0$ be the center of $T$. A vertex $v$ adjacent to $u_0$ in $T$ is \textit{diametrically distinguished} if there is a diametrical path contains the vertex $v$.}
    \end{dfn}
 In the following theorem, we show that the rank of $\mathcal{E}(T)$ is twice the number of diametrically distinguished vertices in $T$.

\begin{thm}\label{inertia-even}
Let $T$ be a tree on $n$ vertices with $\diam(T)=2d$, $d\geq 2, d\in \mathbb{N}$, and let $u_0$ be the center of $T$. Let $u_1,u_2,\hdots, u_l$ $(l\geq 2)$ be the neighbours of $u_0$ such that $e_{T_i}(u_i)=d-1$, where $T_i$ are the components of $T-\cup_{i=1}^l\{u_0,u_i\}$ containing $u_i$, for $i=1,2,\hdots, l$. Then, the inertia of $\mathcal{E}(T)$ is $(l,l,n-2l)$.
\end{thm}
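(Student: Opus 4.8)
The plan is to follow the template of Theorem~\ref{Inertia-odd}, with the $l$ distinguished branches playing the role that the two ``sides'' played there. I expect $\rank \mathcal{E}(T) = 2l$, with $l$ units of rank coming from the ``far'' vertices of the branches and $l$ from suitable ``near'' vertices, and I will determine the signs by exhibiting a $2l\times 2l$ principal submatrix of precisely the shape analysed in Lemma~\ref{inertia-B}.

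First I would fix the geometry. As $\diam(T)=2d$, the centre $u_0$ is unique, $e(u_0)=d$, and every vertex is within distance $d$ of $u_0$. For each distinguished neighbour $u_i$ set $W_i=\{w\in T_i: d(w,u_0)=d\}$; distinguishedness ($e_{T_i}(u_i)=d-1$) makes $W_i\neq\emptyset$, and the vertices at distance $d$ from $u_0$ are exactly $W:=\bigcup_i W_i$. Let $S=V(T)\setminus W$ be the remaining (``inner'') vertices, among them $u_0$, all $u_i$, and every vertex of a non-distinguished branch. Splitting a geodesic at $u_0$ and using $l\geq 2$, one checks: (i) $e(w)=2d$ for $w\in W$, attained exactly by the far vertices of the \emph{other} branches, so $d(w,w')=2d$ if $w\in W_i,\,w'\in W_j$ with $i\neq j$, while $d(w,w')\leq 2d-2$ within a single $W_i$; and (ii) an inner vertex $x$ at distance $k\leq d-1$ from $u_0$ has $e(x)=d+k$, again attained precisely by the far vertices of the branches \emph{not} containing $x$.

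These facts, together with the rule $\mathcal{E}(T)_{uv}=d(u,v)$ iff $d(u,v)=\min\{e(u),e(v)\}$, pin down $\mathcal{E}(T)$ in the partition $V(T)=W\cup S$. The $W$--$W$ block is $2d$ times the adjacency matrix of the complete $l$-partite graph with parts $W_1,\dots,W_l$; the $S$--$S$ block vanishes identically, since for inner $x,y$ one has $d(x,y)\leq k_x+k_y\leq 2(d-1)<d+\min\{k_x,k_y\}=\min\{e(x),e(y)\}$; and in the $W$--$S$ block the entry between $w\in W_i$ and an inner $x$ is $e(x)$ when $w$ attains $e(x)$ (that is, when $x\notin T_i$) and $0$ otherwise. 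Consequently all rows of $R[W_i]$ coincide, so $\rank R[W]\leq l$; and every inner row is a scalar multiple of the zero-extension of $\mathbf{1}_{W\setminus W_i}$ (when $x\in T_i$) or of $\mathbf{1}_W$ (when $x=u_0$ or $x$ lies in a non-distinguished branch). Since $\mathbf{1}_W=\tfrac{1}{l-1}\sum_{i=1}^l\mathbf{1}_{W\setminus W_i}$, the inner rows lie in the $l$-dimensional span of $\{\mathbf{1}_{W\setminus W_i}\}_{i=1}^l$, so $\rank R[S]\leq l$ and hence $\rank\mathcal{E}(T)\leq 2l$.

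For the inertia, choose for each $i$ a far vertex $w_i\in W_i$ and the vertex $p_i\in T_i$ at distance $d-1$ from $u_0$ on a geodesic from $u_0$ to $w_i$, so that $e(p_i)=2d-1$. Evaluating entries by (i)--(ii), the principal submatrix of $\mathcal{E}(T)$ indexed by $\{w_1,\dots,w_l,p_1,\dots,p_l\}$ is
\[
\begin{pmatrix} 2d(J_l-I_l) & (2d-1)(J_l-I_l)\\ (2d-1)(J_l-I_l) & \mathbf{0}\end{pmatrix},
\]
which by Lemma~\ref{inertia-B} (applied with block size $l$) has inertia $(l,l,0)$. Thus it has $l$ positive and $l$ negative eigenvalues, so Theorem~\ref{interlace} forces $\mathcal{E}(T)$ to have at least $l$ positive and at least $l$ negative eigenvalues; combined with $\rank\mathcal{E}(T)\leq 2l$ this yields $In(\mathcal{E}(T))=(l,l,n-2l)$. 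I expect the real work to be the eccentricity bookkeeping in (i)--(ii)---in particular verifying that the vertices attaining $e(x)$ are \emph{exactly} the far vertices outside $x$'s own branch, which is what simultaneously kills the $S$--$S$ block and makes the far rows within a class coincide, and which is where $l\geq 2$ and $d\geq 2$ are genuinely needed.
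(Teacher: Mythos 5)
Your proposal is correct and takes essentially the same route as the paper's proof: the same partition into far and inner vertices, the same rank-at-most-$2l$ bound (identical rows within each far class, inner rows lying in the span of the $l$ indicator-type rows), and the same $2l\times 2l$ principal submatrix fed into Lemma \ref{inertia-B} and Theorem \ref{interlace}. One small slip to repair: in your verification that the $S$--$S$ block vanishes, the last inequality in the chain $d(x,y)\le k_x+k_y\le 2(d-1)<d+\min\{k_x,k_y\}$ fails whenever $\min\{k_x,k_y\}\le d-2$ (for instance $x=u_0$); the correct estimate is $d(x,y)\le k_x+k_y=\max\{k_x,k_y\}+\min\{k_x,k_y\}\le (d-1)+\min\{k_x,k_y\}<d+\min\{k_x,k_y\}=\min\{e(x),e(y)\}$.
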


\begin{proof}
Let $T_{l+1}$ be the component of $T-\cup_{i=1}^l\{u_0,u_i\}$ containing $u_0$. Let $V(T)=V_1\cup V_2\cup \hdots \cup V_{2l} \cup V_{2l+1}$ be a partition of the vertex set of $T$, where
\begin{eqnarray*}
V_i &=& \{ u\in T_i: d(u,u_0)=d\},\\
V_{l+i} &=& \{ v\in T_i:0 < d(u,u_0)<d\},~~~~~\mbox{ and}\\
V_{2l+1} &=& V(T_{l+1}).
\end{eqnarray*}

For each $i,j=1,2,\hdots, l$, it is easy to check that
\begin{align*}
 \mathcal{E}(T)_{uv} &=2d,\quad \text{for all} \quad u\in V_i, v\in V_j, i\neq j, \\
 \mathcal{E}(T)_{uv}&=e(u), \quad \text{for all} \quad u\in V_{l+i}, v\in V_j, i\neq j,\\
 \mathcal{E}(T)_{uv} &=e(v), \quad \text{for all} \quad u\in V_i, v\in V_{2l+1},\\
 \mathcal{E}(T)_{uv} &=0,\quad \text{for all} \quad u\in V_i, v\in V_{l+i},\\
 \mathcal{E}(T)_{uv} &=0,\quad \text{for all} \quad u\in V_{l+i}, v\in V_{2l+1},\\
 \mathcal{E}(T)_{uv} &=0,\quad \text{for all} \quad u,v\in V_i,\\
 \mathcal{E}(T)_{uv} &=0,\quad \text{for all} \quad u,v\in V_{l+i}, \mbox{and}\\
 \mathcal{E}(T)_{uv} &=0,\quad \text{for all} \quad u,v\in V_{2l+1}.
\end{align*}

Now, corresponding to the above  partition of $V(T)$, the eccentricity matrix of $T$ can be written as
\[ \mathcal{E}(T)=
\begin{blockarray}{cccccccccc}
& V_1 & V_2 &\hdots & V_l & V_{l+1} & V_{l+2} & \hdots & V_{2l} & V_{2l+1}\\
\begin{block}{c(ccccccccc)}
 V_1 & \textbf{0} & (2d)J & \hdots & (2d)J & \textbf{0} & P_{1,l+2} & \hdots & P_{1,2l} & P_{1,2l+1} \\
 V_2 & (2d)J & \textbf{0} & \hdots & (2d)J & P_{2,l+1} & \textbf{0} & \hdots & P_{2,2l} & P_{2,2l+1} \\
 \vdots & \vdots & \vdots & \ddots &\vdots &\vdots &\vdots &\ddots &\vdots & \vdots \\
 V_l & (2d)J & (2d)J  & \hdots & \textbf{0} & P_{l,l+1}  & P_{l,l+2} &\hdots & \textbf{0} & P_{l,2l+1} \\
 V_{l+1} & \textbf{0}  & P_{2,l+1}^{\prime} & \hdots & P_{l,l+1}^{\prime} & \textbf{0} & \textbf{0} & \hdots & \textbf{0} & \textbf{0}\\
 V_{l+2} & P_{1,l+2}^{\prime} & \textbf{0} & \hdots & P_{l,l+2}^{\prime} & \textbf{0} & \textbf{0} & \hdots & \textbf{0} & \textbf{0}\\
\vdots & \vdots & \vdots & \ddots &\vdots &\vdots &\vdots &\ddots &\vdots & \vdots \\
V_{2l} & P_{1,2l}^{\prime} & P_{2,2l}^{\prime} & \hdots & \textbf{0} & \textbf{0} & \textbf{0} & \hdots & \textbf{0} & \textbf{0}\\
V_{2l+1} & P_{1,2l+1}^{\prime} & P_{2,2l+1}^{\prime} & \hdots & P_{l,2l+1}^{\prime}& \textbf{0} & \textbf{0} & \hdots & \textbf{0} & \textbf{0}\\
\end{block}
\end{blockarray} ,
 \]
where the rows of each matrix $P_{i,l+j}$ are identical with $i\neq j$ and $i=1,2,\hdots, l$; $j=1,2,\hdots, l,l+1$. Note that, for a fixed $j$, the columns of the matrices $P_{i, l+j}$ are multiples of the vector $e=(1, \dots, 1)^T$ (of appropriate sizes).  Moreover, for $1 \leq j \leq l+1$ and  $v \in V_{l+j}$, all the non-zero entries of the column corresponds to the vertex $v$ are the same.

For $i=1,2,\hdots,l$, the rows corresponding to all the  vertices of $V_i$ are the same in $\mathcal{E}(T)$, so the rank of $R[V_i]$ is one. Let $u$ be a vertex in $ V_{l+i}$. Then for any vertex $v\in V_{l+i}$, $R[v]=cR[u]$, where $c\in \mathbb{R}$. Thus $\rank (R[V_{l+i}]) =1$ for $i=1,2,\hdots,l$.

If $v\in  V_{2l+1}$, then $R[v]=c_1R[v_1]+c_2R[v_2]+\hdots+c_lR[v_l]$ for some $v_i\in V_{l+i},  c_i\in \mathbb{R}$ and $i=1,2,\hdots,l$. Thus, the rows corresponding to $V_{2l+1}$ are linear combination of the rows corresponding to $V_{l+1},V_{l+2},\hdots,V_{2l}$ in $\mathcal{E}(T)$. Since $R[V_1],R[V_2],\hdots,R[V_{2l}]$ are of rank one in $\mathcal{E}(T)$, therefore the rank of $\mathcal{E}(T)$ is at most $2l$.

For $i=1,2,\hdots, l$, let $v_i\in V_i$ and $v_{l+i}$ be its adjacent vertex in $V_{l+i}$. Then the principal submatrix of $\mathcal{E}(T)$ indexed by the vertices $v_1,v_2,\hdots,v_l,v_{l+1},\hdots,v_{2l}$ is given by
\[B=
\begin{blockarray}{ccccccccc}
& v_1 & v_2 &\hdots & v_l & v_{l+1} & v_{l+2} & \hdots & v_{2l}\\
\begin{block}{c(cccccccc)}
 v_1 & 0 & 2d & \hdots & 2d & 0 & 2d-1 & \hdots & 2d-1  \\
 v_2 & 2d & 0 & \hdots & 2d & 2d-1 & 0 & \hdots & 2d-1  \\
 \vdots & \vdots & \vdots & \ddots &\vdots &\vdots &\vdots &\ddots &\vdots  \\
 v_l & 2d & 2d  & \hdots & 0 & 2d-1 & 2d-1 &\hdots & 0 \\
 v_{l+1} & 0  & 2d-1 & \hdots & 2d-1 & 0 & 0 & \hdots & 0 \\
 v_{l+2} & 2d-1 & 0 & \hdots & 2d-1 & 0 & 0 & \hdots & 0 \\
\vdots & \vdots & \vdots & \ddots &\vdots &\vdots &\vdots &\ddots &\vdots \\
v_{2l} & 2d-1 & 2d-1 & \hdots & 0 & 0 & 0 & \hdots & 0 \\
%v_{2l+1} & 2d-1 & 2d-1 & \hdots & 2d-1 & 0 & 0 & \hdots & 0 \\
\end{block}
\end{blockarray}\]
Now, $B$ can be partitioned as $ B=
\left( {\begin{array}{cc}
    2d(J-I) & (2d-1)(J-I) \\
    (2d-1)(J-I) & \textbf{0} \\
\end{array} } \right)$. It is easy to see that the rank of $B$ is $2l$, and hence $\rank(\mathcal{E}(T))\geq 2l$. Thus the rank of $\mathcal{E}(T)$ is $2l$.

By Lemma \ref{inertia-B}, we have $In(B)=(l,l,0)$. Therefore, by Theorem \ref{interlace}, it follows that $\mathcal{E}(T)$ has at least $l$ positive eigenvalues and $l$ negative eigenvalues. Since $\rank(\mathcal{E}(T))=2l$, $\mathcal{E}(T)$ has exactly $l$ positive eigenvalues and $l$ negative eigenvalues. Thus the inertia of $\mathcal{E}(T)$ is $(l,l,2n-l)$.
\end{proof}

\section{Symmetry of $\mathcal{E}$-eigenvalues of trees }\label{sec4}

If $G$ is a simple graph, then the spectrum of the adjacency matrix of $G$ is symmetric with respect to the origin if and only if $G$ is bipartite.
In this section, we study the symmetry of the $\mathcal{E}$-spectrum of trees. We characterize the trees for which the $\mathcal{E}$-eigenvalues are symmetric with respect to the origin. Precisely, we show that if $T$ is a tree,  then the spectrum of $\mathcal{E}(T)$ is symmetric with respect to the origin if and only if $\diam(T)$ is odd.

 In the following theorem, to start with, we show that the $\mathcal{E}$-eigenvalues of a tree $T$ with odd diameter are symmetric with respect to the origin.

\begin{thm}\label{symmetry-odd}
If $T$ is a tree with odd diameter, then the eigenvalues of $\mathcal{E}(T)$ are symmetric about origin, that is, if $\lambda$ is an eigenvalue of $\mathcal{E}(T)$ with multiplicity $k$, then $-\lambda$ is also an eigenvalue of $\mathcal{E}(T)$ with multiplicity $k$.
\end{thm}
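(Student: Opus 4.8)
The plan is to exhibit a symmetric structure on $\mathcal{E}(T)$ forcing its characteristic polynomial to be an even or odd function of $\lambda$, which is precisely the statement that the spectrum is symmetric about the origin. The natural device is the block structure already derived in the proof of Theorem \ref{Inertia-odd}. Recall that for a tree of odd diameter $2d+1$ with the partition $V(T)=V_1\cup V_2\cup V_3\cup V_4$, the matrix takes the form
\[
\mathcal{E}(T)=
\left( {\begin{array}{cccc}
\textbf{0} & (2d+1)J & \textbf{0} & P\\
(2d+1)J & \textbf{0} & Q & \textbf{0}\\
\textbf{0} & Q^{\prime} & \textbf{0} & \textbf{0}\\
P^{\prime} & \textbf{0} & \textbf{0} & \textbf{0}\\
\end{array} } \right).
\]
The key observation is that the vertex set splits into the ``$T_1$-side'' $V_1\cup V_3$ and the ``$T_2$-side'' $V_2\cup V_4$, and every nonzero block of $\mathcal{E}(T)$ connects one side to the other: there are no nonzero intra-side blocks. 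Thus, after reordering the rows and columns so that the $T_1$-side comes first, $\mathcal{E}(T)$ is a $2\times 2$ block matrix of the form $\left(\begin{smallmatrix} \textbf{0} & M\\ M^{\prime} & \textbf{0}\end{smallmatrix}\right)$ for a suitable rectangular $M$. This is exactly the bipartite-type structure that forces spectral symmetry.

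First I would make the bipartition explicit. Define $W_1=V_1\cup V_3$ and $W_2=V_2\cup V_4$; I claim $\mathcal{E}(T)_{uv}=0$ whenever $u,v$ lie in the same $W_i$. Checking this against the six entry rules listed in Theorem \ref{Inertia-odd} is routine: the nonzero entries occur only for the pairs $(V_1,V_2)$, $(V_4,V_1)$, $(V_2,V_3)$, and their transposes, each of which straddles the two sides; all six zero-rules cover the within-side pairs. Granting the claim, choose the permutation that lists $W_1$ then $W_2$, so that $Q\mathcal{E}(T)Q^{\prime}=\left(\begin{smallmatrix} \textbf{0} & M\\ M^{\prime} & \textbf{0}\end{smallmatrix}\right)$ where $M$ is the $|W_1|\times|W_2|$ submatrix of off-diagonal entries. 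Similarity preserves the spectrum, so it suffices to analyse this permuted matrix.

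Next I would invoke the standard symmetry argument for matrices of this block-antidiagonal shape. Let $S=\operatorname{diag}(I_{|W_1|},-I_{|W_2|})$, a signature matrix with $S^2=I$, hence $S=S^{-1}$. A direct computation gives
\[
S\left( {\begin{array}{cc} \textbf{0} & M\\ M^{\prime} & \textbf{0}\end{array}} \right)S^{-1}
=\left( {\begin{array}{cc} \textbf{0} & -M\\ -M^{\prime} & \textbf{0}\end{array}} \right)
=-\left( {\begin{array}{cc} \textbf{0} & M\\ M^{\prime} & \textbf{0}\end{array}} \right).
\]
Therefore the permuted matrix is similar to its own negative, which means $\mathcal{E}(T)$ is similar to $-\mathcal{E}(T)$. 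Consequently they have the same characteristic polynomial, and if $v$ is an eigenvector of $\mathcal{E}(T)$ for $\lambda$ then $Sv$ (in the permuted coordinates) is an eigenvector for $-\lambda$; since $S$ is an invertible linear map it carries the $\lambda$-eigenspace isomorphically onto the $(-\lambda)$-eigenspace, so the multiplicities of $\lambda$ and $-\lambda$ coincide. This yields exactly the claimed conclusion.

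I do not expect a serious obstacle here; the content is entirely in correctly identifying the bipartition $W_1,W_2$ of the vertex set and verifying that no nonzero entry of $\mathcal{E}(T)$ lies within a single part. The only point demanding care is confirming that the eccentricity-matrix entries respect this bipartition for \emph{every} pair of vertices, not merely the representative blocks; but this is immediate from the six entry rules already established in the proof of Theorem \ref{Inertia-odd}, since those rules exhaustively describe $\mathcal{E}(T)$ and each nonzero case is cross-side. Once the bipartition is in hand, the signature-similarity $\mathcal{E}(T)\sim-\mathcal{E}(T)$ is purely formal and delivers the symmetry of the spectrum together with the equality of multiplicities.
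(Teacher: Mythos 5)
Your proposal is correct and follows essentially the same route as the paper: the paper's proof likewise splits $V(T)$ into the $T_1$-side and the $T_2$-side, observes that every nonzero entry of $\mathcal{E}(T)$ is cross-side so that $\mathcal{E}(T)=\left( \begin{smallmatrix} \textbf{0} & A\\ A^{\prime} & \textbf{0}\end{smallmatrix} \right)$ up to ordering, and then declares the remainder routine. Your signature-matrix similarity $S\mathcal{E}(T)S^{-1}=-\mathcal{E}(T)$ is precisely the standard argument the paper leaves implicit, so you have simply filled in that final step explicitly.
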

\begin{proof}
Let $T$ be a tree with diameter $2d+1$, $d\geq 1$. Let $u_0$ and $v_0$ be the centers of $T$, and $T_1$ and $T_2$ be the components of $T-\{u_0,v_0\}$ containing $u_0$ and $v_0$, respectively. Define
\begin{eqnarray*}
V_1 &=& \{ u\in T_1: d(u,u_0)=d\},\\
V_2 &=& \{ u\in T_1: 0\leq d(u,u_0)<d\},\\
V_3 &=& \{ v\in T_2: d(v,v_0)=d\}, ~\mbox{and}\\
V_4 &=& \{ v\in T_2: 0\leq d(v,v_0)<d\}.
\end{eqnarray*}
Then $V(T)=V_1 \cup V_2 \cup V_3 \cup V_4$ is a partition of $V(T)$. Partitioning the eccentricity matrix of $T$ with respect to the vertices of $V_1 \cup V_2$ and $V_3 \cup V_4$ gives us
$$\mathcal{E}(T) = \left( {\begin{array}{cc}
        \textbf{0} & A\\
        A^{\prime} & \textbf{0}\\
\end{array} } \right),$$
for some matrix $A$. The remainder of the proof is routine.
%Let $x$ be an eigenvector of $\mathcal{E}(T)$ corresponding to an eigenvalue $\lambda$. Partition $x$ accordingly so that
%$$ \left( {\begin{array}{cc}
%       \textbf{0} & A\\
%       A^{\prime} & \textbf{0}\\
%\end{array} } \right)
%\left( {\begin{array}{c}
%       x_1\\
%       x_2\\
%\end{array} } \right)=\lambda \left( {\begin{array}{c}
%       x_1\\
%       x_2\\
%\end{array} } \right). $$
%Then it may be verified that $$ \left( {\begin{array}{cc}
%       \textbf{0} & A\\
%       A^{\prime} & \textbf{0}\\
%\end{array} } \right)
%\left( {\begin{array}{c}
%       x_1\\
%       -x_2\\
%\end{array} } \right)=-\lambda \left( {\begin{array}{c}
%       x_1\\
%       -x_2\\
%\end{array} } \right). $$
%Thus, $-\lambda$ is also an eigenvalue of $\mathcal{E}(T)$. It is also clear that if we have $k$ linearly independent eigenvectors for $\lambda$, then the above construction will produce $k$ linearly linearly independent eigenvectors for $-\lambda$. Thus, the multiplicity of $-\lambda$ is also $k$. This completes the proof.
\end{proof}

As a consequence, we get the following result.
\begin{cor}\label{distinct-odd}
If $T$ is a tree $n\geq 5$ vertices with odd diameter, then $T$ has exactly five distinct $\mathcal{E}$-eigenvalues.
\end{cor}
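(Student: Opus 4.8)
The plan is to combine the inertia computation from Theorem \ref{Inertia-odd} with the spectral symmetry from Theorem \ref{symmetry-odd}, and then use the Perron--Frobenius theorem to pin down the multiplicities. By Theorem \ref{Inertia-odd}, for a tree $T$ on $n \geq 4$ vertices with odd diameter the inertia of $\mathcal{E}(T)$ is $(2,2,n-4)$; that is, counted with multiplicity, $\mathcal{E}(T)$ has exactly two positive eigenvalues, exactly two negative eigenvalues, and $n-4$ zero eigenvalues. Since $n \geq 5$, we have $n-4 \geq 1$, so $0$ is an $\mathcal{E}$-eigenvalue, contributing one distinct value.

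Next I would invoke the Perron--Frobenius theorem. By Theorem \ref{ecc irreducible}, $\mathcal{E}(T)$ is irreducible, and it is entrywise nonnegative, so its spectral radius $\rho(\mathcal{E}(T)) = \xi_1$ is a \emph{simple} eigenvalue. Because the total multiplicity of the positive eigenvalues is exactly $2$ and $\xi_1$ already accounts for multiplicity one, the remaining positive eigenvalue mass equals one; hence there is a second positive eigenvalue $\xi_2$ of multiplicity one with $\xi_2 \neq \xi_1$. Thus $\mathcal{E}(T)$ has precisely two \emph{distinct} positive eigenvalues $\xi_1 > \xi_2 > 0$.

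Finally, by the symmetry established in Theorem \ref{symmetry-odd}, each positive eigenvalue $\xi$ of multiplicity $k$ is matched by a negative eigenvalue $-\xi$ of the same multiplicity $k$. Applying this to $\xi_1$ and $\xi_2$ yields two distinct negative eigenvalues $-\xi_2 > -\xi_1$, each of multiplicity one, and these exhaust the negative eigenvalue mass of $2$. Collecting everything, the distinct $\mathcal{E}$-eigenvalues are exactly
\[
\xi_1 > \xi_2 > 0 > -\xi_2 > -\xi_1,
\]
so $T$ has exactly five distinct $\mathcal{E}$-eigenvalues.

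The argument is short and I do not anticipate a genuine obstacle; the one point that requires care is the step where simplicity of the Perron eigenvalue is used to rule out the alternative that the two positive eigenvalues coincide as a single eigenvalue of multiplicity two. Without Perron--Frobenius one could only conclude ``at most five'' distinct eigenvalues, so the irreducibility hypothesis (Theorem \ref{ecc irreducible}) is doing the essential work in forcing exactly five.
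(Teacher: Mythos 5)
Your proposal is correct and follows essentially the same route as the paper's own proof: both combine the inertia $(2,2,n-4)$ from Theorem \ref{Inertia-odd}, the spectral symmetry from Theorem \ref{symmetry-odd}, and the simplicity of the Perron eigenvalue of the irreducible nonnegative matrix $\mathcal{E}(T)$ (Theorem \ref{ecc irreducible}) to conclude that the nonzero eigenvalues are $\xi_1 > \xi_2 > 0 > -\xi_2 > -\xi_1$. Your explicit observation that $n \geq 5$ forces $n-4 \geq 1$, so that $0$ is indeed an eigenvalue, is a small point the paper leaves implicit but is handled correctly in your write-up.
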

\begin{proof}
If $T$ is a tree with odd diameter, then, by Theorem \ref{Inertia-odd}, it follows that $\mathcal{E}(T)$ has two  positive and two negative eigenvalues. If $\xi_1\geq \xi_2$ are two positive eigenvalues of $\mathcal{E}(T)$, then, by Theorem \ref{symmetry-odd}, $-\xi_1$ and $-\xi_2$ are also the eigenvalues of $\mathcal{E}(T)$. Since $\mathcal{E}(T)$ is a nonnegative irreducible matrix, by Perron-Frobenius theorem, $\xi_1$ is simple. Therefore, $\xi_1\neq \xi_2$ and hence $-\xi_1\neq -\xi_2$. Thus, all four non-zero eigenvalues of $\mathcal{E}(T)$ are distinct. This completes the proof.
\end{proof}

Let $\xi_n(T)$ denote the least $\mathcal{E}$-eigenvalue of a tree $T$. In \cite{mahato2020spectra}, Mahato et al. proved that $\xi_n(T)\leq -2$ if and only if $T$ is a star. In the following theorem, we  establish an upper bound for the least $\mathcal{E}$-eigenvalue of a tree $T$ with odd diameter.

\begin{thm}
Let $T$ be a tree on $n\geq 4$ vertices with odd diameter.
\begin{enumerate}
    \item If $4 \leq n\leq 15$, then
    $$\xi_n(T)\leq -\sqrt{\frac{13n-35+\sqrt{(13n-35)^2-64(n-3)}}{2}}. $$ Further, equality holds if and only if $T\cong T_{n,3}^{0,n-4}$.
    \item If $n\geq 16$, then $$\xi_n(T)\leq
    \begin{cases}
\text{$-\sqrt{\frac{16n-21+\sqrt{800n-1419}}{2}}$,} & \quad\text{if $n$ is odd;}\\
\text{$-\sqrt{\frac{16n-21+5\sqrt{32n-67}}{2}}$,} & \quad \text{if $n$ is even.}
\end{cases} $$
Each of the equality holds if and only if $T\cong T_{n,5}^{\lfloor \frac{n-6}{2}\rfloor,\lceil \frac{n-6}{2}\rceil}$.
\end{enumerate}
\end{thm}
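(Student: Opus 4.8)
The plan is to reduce this bound on the least $\mathcal{E}$-eigenvalue to the known lower bound on the $\mathcal{E}$-spectral radius from Theorem \ref{min-gen}, exploiting the spectral symmetry established in Theorem \ref{symmetry-odd}. The guiding observation is that for a tree $T$ with odd diameter, the entire $\mathcal{E}$-spectrum is symmetric about the origin, so the least eigenvalue $\xi_n(T)$ is precisely the negative of the $\mathcal{E}$-spectral radius.

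First I would make the identity $\xi_n(T) = -\xi_1(T)$ precise. Since $\mathcal{E}(T)$ is entrywise nonnegative and, by Theorem \ref{ecc irreducible}, irreducible, the Perron--Frobenius theorem guarantees that the $\mathcal{E}$-spectral radius $\rho(\mathcal{E}(T))$ coincides with the largest eigenvalue $\xi_1(T)$. Because $T$ has odd diameter, Theorem \ref{symmetry-odd} ensures that $-\xi_1(T)$ is also an eigenvalue of $\mathcal{E}(T)$; as $\xi_1(T)$ is the maximum eigenvalue, $-\xi_1(T)$ must be the minimum, so $\xi_n(T) = -\xi_1(T)$.

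With this identity in hand, the inequality follows by negating Theorem \ref{min-gen}. For $4 \le n \le 15$ that theorem gives $\xi_1(T) \ge \sqrt{(13n-35+\sqrt{(13n-35)^2-64(n-3)})/2}$, and negating both sides produces exactly the claimed upper bound on $\xi_n(T) = -\xi_1(T)$; the two subcases for $n \ge 16$ are handled identically. For the equality characterization, equality in the upper bound for $\xi_n(T)$ holds if and only if $\xi_1(T)$ attains its lower bound, which by Theorem \ref{min-gen} occurs if and only if $T \cong T_{n,3}^{0,n-4}$ (respectively $T \cong T_{n,5}^{\lfloor (n-6)/2 \rfloor, \lceil (n-6)/2 \rceil}$).

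The only point needing care --- and essentially the sole obstacle --- is confirming that the equality case transfers correctly to the restricted class of odd-diameter trees. Theorem \ref{min-gen} minimizes $\xi_1$ over \emph{all} trees on $n$ vertices, whereas here we restrict to odd-diameter trees. This causes no difficulty, because the extremal trees $T_{n,3}^{0,n-4}$ and $T_{n,5}^{\lfloor (n-6)/2 \rfloor, \lceil (n-6)/2 \rceil}$ have diameters $3$ and $5$ respectively, hence odd, so they already lie in our subclass. Consequently the global lower bound on $\xi_1$ remains valid and sharp after restricting, and the ``if and only if'' in Theorem \ref{min-gen} carries over unchanged to our setting.
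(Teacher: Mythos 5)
Your proposal is correct and matches the paper's proof, which consists of exactly the same reduction: citing Theorem \ref{min-gen} together with Theorem \ref{symmetry-odd} to get $\xi_n(T)=-\xi_1(T)$ and negating the spectral-radius bound. Your additional check that the extremal trees $T_{n,3}^{0,n-4}$ and $T_{n,5}^{\lfloor (n-6)/2\rfloor,\lceil (n-6)/2\rceil}$ have odd diameter (so the equality characterization survives the restriction to odd-diameter trees) is a detail the paper leaves implicit, and it is handled correctly.
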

\begin{proof}
The proof follows from Theorem \ref{min-gen} and Theorem \ref{symmetry-odd}.
\end{proof}

%\begin{proof}
%Let the eigenvalues of $\mathcal{E}(T)$ are symmetric with respect to the origin and let $\lambda_1,\lambda_2,\hdots,\lambda_l$, $-\lambda_1,-\lambda_2,\hdots,-\lambda_l$ be the non-zero eigenvalues of $\mathcal{E}(T)$. Here, $\lambda_1,\lambda_2,\hdots,\lambda_l$ are not necessarily distinct. Then, $0$ is an eigenvalue of $\mathcal{E}(T)$ with multiplicity $n-2l$. Therefore, the characteristic polynomial of $\mathcal{E}(T)$ is equal to
%
%$$\phi_{\mathcal{E}}(\lambda)={\lambda}^{n-2l}({\lambda}^2-\lambda_1^2)({\lambda}^2-\lambda_2^2)\hdots ({\lambda}^2-\lambda_l^2).$$
%Thus, $c_{2l+1}=0$, $k=0,1,\hdots,$ and hence the proof.
%\end{proof}
For an $n \times n $ matrix $A$,  let $E_k(A)$ denote the sum of its principal minors of size $k$. The following crucial result will be used in the proof of  Theorem \ref{symmetry-even}.
\begin{lem}\label{Pminor-A}
For $l\geq 2$, let $A$ be a $(2l+1)\times (2l+1)$  matrix defined by
\[A=
\begin{blockarray}{cccccccccc}
& v_1 & v_2 &\hdots & v_l & u_1 & u_2 & \hdots & u_l & u_0\\
\begin{block}{c(ccccccccc)}
 v_1 & 0 & 2d & \hdots & 2d & 0 & d+1 & \hdots & d+1 &d  \\
 v_2 & 2d & 0 & \hdots & 2d & d+1 & 0 & \hdots & d+1 & d \\
 \vdots & \vdots & \vdots & \ddots &\vdots &\vdots &\vdots &\ddots &\vdots & d  \\
 v_l & 2d & 2d  & \hdots & 0 & d+1 & d+1 &\hdots & 0 & d \\
 u_1 & 0 & d+1 & \hdots & d+1 & 0 & 0 & \hdots & 0 & 0 \\
 u_2 & d+1 & 0 & \hdots & d+1 & 0 & 0 & \hdots & 0 & 0 \\
\vdots & \vdots & \vdots & \ddots &\vdots &\vdots &\vdots &\ddots &\vdots & \vdots\\
u_l & d+1 & d+1 & \hdots & 0 & 0 & 0 & \hdots & 0 & 0\\
u_0 & d & d & \hdots & d & 0 & 0 & \hdots & 0 & 0\\
\end{block}
\end{blockarray}~.\]
Then the sum of all $(2l-1)\times (2l-1)$ principal minors of $A$ is non-zero, that is, $E_{2l-1}(A)\neq 0$.
\end{lem}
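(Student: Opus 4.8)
The plan is to compute $E_{2l-1}(A)$ by a careful structural analysis of the $(2l-1)\times(2l-1)$ principal minors, exploiting the heavy low-rank and zero-block structure of $A$. I would first record the block structure: writing $V=\{v_1,\dots,v_l\}$, $U=\{u_1,\dots,u_l\}$, and $u_0$, the matrix has a $(2l+1)$-fold partition in which the $U\cup\{u_0\}$-block is entirely zero, the $V$-block is $2d(J-I)$, the $V\!-\!U$ off-diagonal block is $(d+1)(J-I)$, and the $V\!-\!u_0$ block is the constant vector $d\,e$. The key observation is that a principal submatrix indexed by a set $S$ has a nonzero determinant only if the rows indexed by $S\cap(U\cup\{u_0\})$ remain linearly independent, and this forces strong constraints on which index sets $S$ of size $2l-1$ can contribute.

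First I would determine which principal submatrices of size $2l-1$ can have nonzero determinant. Since deleting two indices from a $(2l+1)$-matrix leaves $2l-1$, I would classify the contributing minors by which two vertices are deleted, and by the intersection pattern of the surviving index set with $V$, $U$, and $\{u_0\}$. Because the entire $U\cup\{u_0\}$ principal block is zero, any surviving submatrix whose rows indexed in $U\cup\{u_0\}$ outnumber its rows indexed in $V$ will be singular (the zero block forces a rank deficiency); this immediately kills most deletion patterns and leaves only a controlled family of surviving index sets to analyze. For each surviving pattern I would expand the determinant along the sparse rows coming from $U$ and $u_0$ using Laplace expansion, reducing each nonzero minor to a determinant of a small structured matrix built from $J$, $I$, and the constant vector $e$, whose value I can evaluate in closed form using the standard eigenvalues of $a(J-I)+b\,ee^{\prime}$-type matrices.

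Then I would sum the surviving minors. I expect the nonzero contributions to come from a handful of symmetric families indexed by which $v_i$, which $u_j$, and whether $u_0$ are retained; by the symmetry of $A$ under simultaneously permuting the $v$'s and $u$'s, minors within each family are equal, so the sum collapses to a small number of combinatorial terms times explicit determinant values. I would assemble these into a single polynomial expression in $d$ and $l$, and the final step is to verify that this expression does not vanish identically. To confirm $E_{2l-1}(A)\neq 0$ I would exhibit its leading behavior in $d$ (the dominant term coming from the minors that retain as many $2d$-entries as possible), showing the top-degree coefficient in $d$ is a nonzero function of $l$; since $d\in\mathbb{N}$ with $d\geq 2$ (forced by the even-diameter hypothesis $\diam=2d$, $d\geq 2$ of Theorem \ref{inertia-even}), a nonvanishing leading coefficient suffices.

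The main obstacle will be the bookkeeping in the Laplace expansion: correctly enumerating the surviving deletion patterns and tracking the signs and the combinatorial multiplicities so that the terms do not cancel accidentally. I would mitigate this by organizing the computation through the Haynsworth/Schur-complement viewpoint already introduced in the excerpt — treating the zero $U\cup\{u_0\}$-block as the pivot-free part and reducing each minor to a Schur-type determinant against the $V$-block — which keeps the algebra tied to the $2d$, $(d+1)$, $d$ parameters and makes the eventual non-vanishing transparent. A useful sanity check is the smallest case $l=2$ (a $5\times 5$ matrix, $E_3(A)$), which I would compute directly to fix the signs and multiplicities before asserting the general pattern.
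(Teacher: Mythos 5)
Your overall strategy is essentially the paper's: classify the $(2l-1)\times(2l-1)$ principal minors by which two indices are deleted, show that any minor not containing all of $v_1,\dots,v_l$ vanishes, evaluate the surviving minors (those deleting two indices from $\{u_1,\dots,u_l,u_0\}$) in closed form via Schur complements against the block $2d(J-I)$, and sum over the two symmetric families (delete $\{u_i,u_0\}$, or delete $\{u_i,u_j\}$ with $i\neq j$). One point in your favour: your vanishing argument --- a principal submatrix whose rows in $U\cup\{u_0\}$ outnumber its rows in $V$ is singular because of the zero block --- disposes of all the degenerate cases at once, whereas the paper verifies case by case that some row or column is a linear combination of the others.

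The genuine flaw is in your concluding step. Proving that the top-degree coefficient of $E_{2l-1}(A)$ as a polynomial in $d$ is a nonzero function of $l$ only shows $E_{2l-1}(A)\neq 0$ for all \emph{sufficiently large} $d$; a polynomial with nonzero leading coefficient can still vanish at the specific integers $d\geq 2$ to which the lemma is applied (compare $p(d)=d-2$). Likewise, "does not vanish identically" is weaker than what the lemma asserts, namely nonvanishing at every admissible value of $d$. So "a nonvanishing leading coefficient suffices" is a non sequitur and the proof, as written, does not close. The repair is immediate from the closed forms your own computation would produce: the surviving minors equal $(-1)^{l-2}\,2d(l-1)(l-2)(d+1)^{2(l-1)}$ (for each of the $l$ deletions $\{u_i,u_0\}$) and $(-1)^{l-2}\,4d^3(d+1)^{2(l-2)}$ (for each of the $\binom{l}{2}$ deletions $\{u_i,u_j\}$), so all nonzero minors carry the same sign $(-1)^l$ for every $d>0$ --- this same-sign observation is exactly how the paper concludes. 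Summing gives $E_{2l-1}(A)=(-1)^l\bigl(2dl(l-1)(l-2)(d+1)^{2(l-1)}+2l(l-1)d^3(d+1)^{2(l-2)}\bigr)$, which is nonzero for all $l\geq 2$ and all $d\geq 1$, not merely for large $d$. Replace your leading-term argument with this sign argument (equivalently: the bracketed polynomial has nonnegative coefficients and a strictly positive term, hence no positive roots) and the proof is complete.
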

\begin{proof}
Let $M_{ij}$ denote the $(2l-1)\times (2l-1)$ principal submatrix of $A$ obtained by deleting the $v_i$-th and $v_j$-th rows and $v_i$-th and $v_j$-th columns. Now, we have the following two cases.

\textbf{Case 1.} If  $l=2$,  then
\[A = \left( {\begin{array}{ccccc}
0 & 2d & 0 & d+1 & d\\
2d & 0 & d+1 & 0 & d\\
0 & d+1 & 0 & 0 & 0\\
d+1 & 0 & 0 & 0 & 0\\
d & d  & 0 & 0 & 0\\
\end{array} } \right).\]
It is easy to check that $\det(M_{34})=4d^3$ is the only non-zero $3 \times 3$ principal minor of $A$. Hence, $E_3(A)=4d^3\neq 0$.

\textbf{Case 2.} Let $l\geq 3$. Note that the row corresponding to the vertex $u_0$ in the matrix $A$ is a linear combination of the rows corresponding to the vertices $\{u_1, \dots, u_l\}.$ Thus, if both $i$ and $j$ are in $v_k$ with $i\neq j$ and $i,j=1,2,\hdots,l$, then it is easy to see that  $\det(M_{ij})=0$. Let $i\in \{v_1, \dots, v_l\}$ and $j = u_0$. Then, in the matrix $M_{ij}$, the row corresponding to the vertex $u_i$ is a linear combination of the rows corresponding to the vertices $\{u_1, \dots, u_{i-1}, u_{i+1}, \dots,  u_l\}$, and hence $\det(M_{ij})=0$ for $i=1,2,\hdots,l$. Let $i\in \{v_1, \dots, v_l\}$ and $j \in \{u_1, \dots, u_l \}$. If $i =v_k$ and $j=u_k$ for some (same) $k$, then the row corresponding to the vertex $u_0$ is a linear combination of the rows corresponding to the vertices $\{u_1,\dots, u_{k-1}, u_{k+1}, \dots, u_l\}$. Suppose  $i = u_{k_1}$ and $j = v_{k_2}$ with $k_1 \neq k_2$, then the columns corresponding to the vertices $u_{k_2}$ and $u_0$ are linearly dependent, and hence $\det(M_{ij})=0$ for all $i,j=1,2,\hdots,l$.

\textbf{Claim:} If $i\in \{u_1, \dots, u_l\}$ and $j = u_0$, then $$\det(M_{ij}) = (-1)^{l-2}2d(l-1)(l-2)(d+1)^{2(l-1)}~\mbox{for}~ i=1,2,\hdots,l.$$

First let us consider the case  $i =u_1$ and $j = u_0$. Then \begin{align*}
M_{1j} &= \left( {\begin{array}{ccccccccc}
0 & 2d & 2d & \hdots & 2d & d+1 & d+1 & \hdots & d+1 \\
2d & 0 &  2d & \hdots & 2d & 0 & d+1  & \hdots & d+1 \\
\vdots & \vdots & \vdots & \ddots &\vdots &\vdots &\vdots &\ddots &\vdots\\
2d & 2d  & 2d & \hdots & 0 & d+1 & d+1 &\hdots & 0 \\
d+1 & 0 & d+1 & \hdots & d+1 & 0 & 0 & \hdots & 0 \\
d+1 & d+1 & 0 & \hdots & d+1 & 0 & 0 & \hdots & 0 \\
\vdots & \vdots & \vdots & \ddots &\vdots &\vdots &\vdots &\ddots &\vdots\\
d+1 & d+1 & d+1 & \hdots & 0 & 0 & 0 & \hdots & 0 \\
\end{array} } \right)   \\
&= \left( {\begin{array}{cc}
2d(J-I) & B^T\\
B & \textbf{0}\\
\end{array} } \right),
\end{align*}
where $B=\left( {\begin{array}{ccccc}
d+1 & 0 & d+1 & \hdots & d+1  \\
d+1 & d+1 & 0 & \hdots & d+1 \\
\vdots & \vdots & \vdots & \ddots &\vdots \\
d+1 & d+1 & d+1 & \hdots & 0  \\
\end{array} } \right).$

Since $2d(J-I)$ is a non-singular square matrix, by the Schur complement formula we have
\begin{align*}
\det(M_{1j}) &= \det(2d(J-I))\det(\textbf{0}-B(2d(J-I))^{-1}B^T) \\
&= (-1)^{l-2}2d(l-1)(l-2)(d+1)^{2(l-1)}
\end{align*}
It is easy to see that  $\det(M_{ij})$ are equal for every $i\in \{u_1, \dots, u_l\}$ and $j = u_0$, as they are all permutationally similar.

By a similar argument, if both $i $ and $j $ are $\{u_1,\dots, u_l\}$,  then $\det(M_{ij})=  (-1)^{l-2}4d^3(d+1)^{2(l-2)}$ for $i\neq j$ and $i,j=1,2,\hdots,l$.

Since, the signs of all $(2l-1)\times (2l-1)$ principal minors of $A$ are the same, therefore the sum of all $(2l-1)\times (2l-1)$ principal minors of $A$ is non-zero. This completes the proof.

%For $i\in u_i$, $j \in u_j$ and $i=1,j=2$, we have
%\begin{align*}
%M_{12} &= \left| {\begin{array}{cccccccccc}
%0 & 2d & 2d & \hdots & 2d & d+1 & d+1 & \hdots & d+1 & d \\
%2d & 0 &  2d & \hdots & 2d & d+1 & d+1  & \hdots & d+1 & d\\
%2d & 2d &  0 & \hdots & 2d & 0 & d+1  & \hdots & d+1 & d\\
%\vdots & \vdots & \vdots & \ddots &\vdots &\vdots &\vdots &\ddots &\vdots  &\vdots\\
%2d & 2d  & 2d & \hdots & 0 & d+1 & d+1 &\hdots & 0 & 0\\
%d+1 & d+1 & 0 & \hdots & d+1 & 0 & 0 & \hdots & 0 & 0\\
%d+1 & d+1 & d+1 & \hdots & d+1 & 0 & 0 & \hdots & 0 & 0\\
%\vdots & \vdots & \vdots & \ddots &\vdots &\vdots &\vdots &\ddots &\vdots &\vdots\\
%d+1 & d+1 & d+1 & \hdots & 0 & 0 & 0 & \hdots & 0 & 0 \\
%d & d & d & \hdots & d & 0 & 0 & \hdots & 0 & 0 \\
%\end{array} } \right|   \\
%&= \left| {\begin{array}{cc}
%2d(J-I) & C^T\\
%C & \textbf{0}\\
%\end{array} } \right|,
%\end{align*}
%where, $C=\left( {\begin{array}{cccccc}
%d+1 &  d+1 & 0 & d+1 & \hdots & d+1  \\
%d+1 & d+1 & d+1 & 0 & \hdots & d+1 \\
%\vdots  & \vdots & \vdots & \vdots & \ddots &\vdots \\
%d+1 & d+1 & d+1 & d+1 & \hdots & 0  \\
%d & d & d & d & \hdots & 0  \\
%\end{array} } \right).$
%
%Since, $2d(J-I)$ is a non-singular square matrix, by Schur complement formula we have
%\begin{align*}
%M_{12} &= \det(2d(J-I))\det(\textbf{0}-C(2d(J-I))^{-1}C^T) \\
%&= (-1)^{l-2}4d^3(d+1)^{2(l-2)}
%\end{align*}
%It is easy to see that the value of all $M_{ij}$ are equal for $i \neq j$ $i\in u_i,j \in u_j$, because they are all permutationally equivalent.

\end{proof}

Next, we recall a well known condition for a matrix to have nonsymmetric  spectrum with respect to the origin.

\begin{lem}\label{symmetry-condition}
    Let $p(\lambda) = {\lambda}^n+c_1{\lambda}^{n-1}+\hdots+c_{n-1}\lambda+c_n$ be a  polynomial such that all of its roots are non-zero and real numbers. If  $c_i$ and $c_{i+1}$ are different from zero for some $i \in \{1, \dots, n-1\}$, then the roots of $p(\lambda)$ are not symmetric about the origin, that is, there exists a real number $\lambda_1$ such that $p(\lambda_1) =0$ and $p(-\lambda_1) \neq 0$.
\end{lem}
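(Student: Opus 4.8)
The plan is to work with the reflected polynomial $\tilde p(\lambda) := (-1)^n p(-\lambda)$. Setting $c_0 = 1$, a direct expansion gives $\tilde p(\lambda) = \sum_{k=0}^n (-1)^k c_k \lambda^{n-k}$, so the coefficient of $\lambda^{n-k}$ in $\tilde p$ is $(-1)^k c_k$. Since the roots of $\tilde p$ are exactly the negatives of the roots of $p$ (with multiplicity), the root multiset of $p$ is invariant under $\lambda \mapsto -\lambda$ if and only if $p = \tilde p$, i.e. if and only if $c_k = 0$ for every odd $k$. The hypothesis supplies an index $i$ with $c_i \neq 0 \neq c_{i+1}$; as exactly one of $i$ and $i+1$ is odd, at least one odd-indexed coefficient is nonzero, and hence $p \neq \tilde p$. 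First I would record this computation, since it is the engine of the whole argument.

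Next I would try to pass to the stated conclusion by contraposition. Assume there is no root $\lambda_1$ with $p(-\lambda_1) \neq 0$; equivalently, $-r$ is a root of $p$ whenever $r$ is. Then $p$ and $\tilde p$ share the same \emph{set} of roots, and the goal would be to promote this to the polynomial identity $p = \tilde p$, contradicting the nonvanishing odd coefficient found above. Grouping the distinct roots into negation-pairs $\{\alpha_j, -\alpha_j\}$ (no root is zero, so no root is its own negative), one factors $p(\lambda) = g(\lambda^2)\, h(\lambda)$, where $g(\lambda^2) = \prod_j (\lambda^2 - \alpha_j^2)^{\min(m_j, m_j')}$ collects the balanced part ($m_j, m_j'$ being the multiplicities of $\alpha_j, -\alpha_j$) and $h(\lambda) = \prod_j (\lambda \mp \alpha_j)^{|m_j - m_j'|}$ records the excess. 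The target is to show $h$ is constant, for then $p = g(\lambda^2)$ is even and all odd coefficients vanish.

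The hard part will be exactly this last upgrade: coincidence of root \emph{sets} does not force coincidence of \emph{multiplicities}, so it does not by itself give $p = \tilde p$. Concretely, one must rule out an imbalance such as $p(\lambda) = (\lambda-1)^2(\lambda+1) = \lambda^3 - \lambda^2 - \lambda + 1$, for which $c_1 = c_2 = -1$ are nonzero, yet every root's negative is again a root; here $h(\lambda) = \lambda - 1$ is non-constant, so the naive argument stalls. The decisive step is therefore to control the excess factor $h$, and I would do this by importing extra information from the setting in which the lemma is applied: when the nonzero roots are \emph{simple} (equivalently, when $p$ is squarefree), one has $m_j = m_j' = 1$ for each pair, $h$ is forced to be constant, and the contradiction closes cleanly, yielding a root whose negative is not a root. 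I expect that establishing or invoking this simplicity, so that the excess multiplicities cannot conceal an odd coefficient, is the genuine obstacle; the coefficient computation of the first paragraph is routine by comparison.
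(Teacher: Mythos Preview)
The paper does not prove this lemma; it merely \emph{recalls} it as a ``well known condition'' and offers no argument, so there is nothing to compare your approach against line by line.

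Your core mechanism is exactly right and is the standard one: with $\tilde p(\lambda)=(-1)^n p(-\lambda)$ one has $\tilde p(\lambda)=\sum_k (-1)^k c_k \lambda^{n-k}$, and the multiset of roots of $p$ is invariant under $\lambda\mapsto-\lambda$ if and only if $p=\tilde p$, i.e.\ all odd-indexed $c_k$ vanish. Since one of $i,i+1$ is odd, the hypothesis $c_ic_{i+1}\neq 0$ kills multiset symmetry. That part of your write-up is complete and correct.

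Where you identify a ``hard part'' you have in fact uncovered a genuine defect in the \emph{statement}, not a gap in your argument. Your example $p(\lambda)=(\lambda-1)^2(\lambda+1)=\lambda^3-\lambda^2-\lambda+1$ satisfies every hypothesis (all roots real and nonzero; $c_1=c_2=-1\neq 0$) yet for every root $\lambda_1$ one has $p(-\lambda_1)=0$. So the ``that is'' clause in the lemma is simply false as written; it is a sloppy paraphrase of the intended conclusion. Do not try to rescue it by importing an unstated simplicity hypothesis --- that hypothesis is neither in the lemma nor available in the application (the eccentricity matrix of a tree with even diameter typically has a repeated eigenvalue $0$, and the nonzero eigenvalues are not asserted to be simple).

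The correct fix is to read ``symmetric about the origin'' in the sense the paper actually uses elsewhere (see Theorem~\ref{symmetry-odd}): \emph{$\lambda$ is a root with multiplicity $k$ if and only if $-\lambda$ is a root with multiplicity $k$}. With that reading the lemma is true, your first paragraph already proves it, and that is precisely what Theorem~\ref{symmetry-even} needs. I would therefore drop your last paragraph, replace the ``that is'' conclusion by the multiset statement, and note the counterexample as a remark on why the original phrasing is inaccurate.
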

In the next theorem, we show that the coefficients of (at least) two consecutive terms in the characteristic polynomial of the eccentricity matrix of a tree with even diameter are non-zero. This implies that the spectrum is not symmetric with respect to the origin.

\begin{thm}\label{symmetry-even}
If $T$ is a tree with even diameter, then the eigenvalues of $\mathcal{E}(T)$ cannot be symmetric about the origin.
\end{thm}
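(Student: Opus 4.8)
The plan is to read off the characteristic polynomial of $\mathcal{E}(T)$ through its principal minors via Theorem \ref{ch-poly-minor}, and then exhibit two consecutive nonzero coefficients so that Lemma \ref{symmetry-condition} forbids symmetry. Write $\diam(T)=2d$ and let $l\geq 2$ be the number of diametrically distinguished vertices, so that by Theorem \ref{inertia-even} we have $\rank(\mathcal{E}(T))=2l$. Since every principal minor of order exceeding the rank vanishes, $E_k(\mathcal{E}(T))=0$ for all $k>2l$, and Theorem \ref{ch-poly-minor} gives
\[
\det(\lambda I-\mathcal{E}(T))=\lambda^{n-2l}\,q(\lambda),\qquad q(\lambda)=\sum_{k=0}^{2l}(-1)^k E_k(\mathcal{E}(T))\,\lambda^{2l-k},
\]
with $E_0=1$. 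The $n-2l$ zero eigenvalues are self-paired under $\lambda\mapsto-\lambda$, so $\Spec_{\mathcal{E}}(T)$ is symmetric about the origin if and only if the $2l$ nonzero eigenvalues, i.e.\ the roots of $q$, are. As all roots of $q$ are nonzero, it therefore suffices, by Lemma \ref{symmetry-condition}, to produce an index $i$ with $E_i(\mathcal{E}(T))\neq 0$ and $E_{i+1}(\mathcal{E}(T))\neq 0$; I will take $i=2l-1$.

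First, $E_{2l}(\mathcal{E}(T))\neq 0$: evaluating $q$ at $0$ gives $q(0)=E_{2l}(\mathcal{E}(T))$, which equals the product of the $2l$ nonzero eigenvalues and hence is nonzero because the rank is exactly $2l$. The substantive step is to show $E_{2l-1}(\mathcal{E}(T))\neq 0$, and this is where I would exploit the block structure recorded in the proof of Theorem \ref{inertia-even} together with Lemma \ref{Pminor-A}. Fix the representative vertices $v_1,\dots,v_l$ (one far vertex per distinguished branch), $u_1,\dots,u_l$ (the distinguished neighbours of the center), and the center $u_0$; the principal submatrix of $\mathcal{E}(T)$ they index is exactly the matrix $A$ of Lemma \ref{Pminor-A}. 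I would then argue that no cancellation can occur among the order-$(2l-1)$ principal minors of $\mathcal{E}(T)$: any $(2l-1)$-subset containing two vertices from the same class $V_i$ or $V_{l+i}$, or drawing on $V_{2l+1}$ redundantly, has equal or proportional rows and so contributes a vanishing minor; each surviving subset selects distinct-class representatives, and replacing a representative by another member of its class multiplies the corresponding row and column by a common positive scalar (the rows in each $V_i$ are equal, and those in each $V_{l+i}$ are positive multiples of one another), hence scales the minor by a positive factor. Consequently every nonzero order-$(2l-1)$ principal minor of $\mathcal{E}(T)$ is a positive multiple of an order-$(2l-1)$ principal minor of $A$, and by Lemma \ref{Pminor-A} these all share a single sign with at least one of them nonzero. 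Summing, $E_{2l-1}(\mathcal{E}(T))$ is a positive combination of equally signed terms, at least one nonzero, whence $E_{2l-1}(\mathcal{E}(T))\neq 0$.

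With $E_{2l-1}(\mathcal{E}(T))\neq 0$ and $E_{2l}(\mathcal{E}(T))\neq 0$, the coefficients of $\lambda^1$ and $\lambda^0$ in $q$, namely $-E_{2l-1}(\mathcal{E}(T))$ and $E_{2l}(\mathcal{E}(T))$, are consecutive and both nonzero; Lemma \ref{symmetry-condition} then shows the roots of $q$ are not symmetric about the origin, and hence neither is $\Spec_{\mathcal{E}}(T)$. I expect the main obstacle to be precisely the sign-consistency bookkeeping in the previous paragraph: one must check carefully that passing from the large matrix $\mathcal{E}(T)$ to the representative matrix $A$ introduces only positive scalars and sends every surviving $(2l-1)$-subset to a genuine principal minor of $A$, so that the equal-sign computation of Lemma \ref{Pminor-A} rules out any cancellation in the sum $E_{2l-1}(\mathcal{E}(T))$.
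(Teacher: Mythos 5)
Your argument for diameters $2d\geq 4$ follows essentially the same route as the paper's own proof, and in one respect it is more careful: the paper passes from ``the sum of the $(2l-1)\times(2l-1)$ principal minors of $A$ is nonzero'' directly to ``the sum of those of $\mathcal{E}(T)$ is nonzero,'' whereas you spell out the point that actually justifies this step --- every surviving principal minor of $\mathcal{E}(T)$ is a \emph{positive} multiple of a principal minor of $A$ (rows within each $V_i$ are equal, and rows within $V_{l+i}$ and $V_{2l+1}$ are positive multiples of the representative rows of $u_i$ and $u_0$), so the equal-sign property of the minors of $A$ rules out cancellation. One small caveat: that equal-sign property is established inside the \emph{proof} of Lemma \ref{Pminor-A} but is not part of its statement, which only asserts $E_{2l-1}(A)\neq 0$; as written you should cite the proof (or strengthen the lemma) rather than the lemma itself, since nonvanishing of the sum of $A$'s minors alone would not preclude cancellation once the positive scalars vary from minor to minor.

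There is, however, a genuine gap: you never treat diameter $2$. The theorem covers every tree of even diameter, and $K_{1,n-1}$ is the unique tree of diameter $2$. Your opening step --- ``let $l\geq 2$ be the number of diametrically distinguished vertices, so that by Theorem \ref{inertia-even} we have $\rank(\mathcal{E}(T))=2l$'' --- fails there: Theorem \ref{inertia-even} is stated only for $d\geq 2$, and for the star it is genuinely false, since all $l=n-1$ leaves are diametrically distinguished while $\mathcal{E}(K_{1,n-1})=D(K_{1,n-1})$ has full rank $n$ and inertia $(1,n-1,0)$, so ``$n-2l$'' zero eigenvalues would be a negative number. The block structure underlying your minor analysis also collapses when $d=1$, because the intermediate classes $V_{l+i}$ are empty, so Lemma \ref{Pminor-A} has no counterpart. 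The repair is immediate --- the paper opens its proof by disposing of this case directly, and Theorem \ref{star-spec} shows the star has one positive and $n-1$ negative $\mathcal{E}$-eigenvalues, which cannot be symmetric about the origin --- but as written your proof does not cover the case $\diam(T)=2$.
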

\begin{proof}
The star $K_{1,n-1}$ is the only tree on $n$ vertices with diameter $2$, and the eigenvalues of $\mathcal{E}(K_{1,n-1})$ are not symmetric with respect to the origin.

Let $T$ be a tree on $n$ vertices with $\diam(T)=2d$ with $d\geq 2 $, and let $u_0$ be the center of $T$. Let $u_1,u_2,\hdots, u_l$ $(l\geq 2)$ be the neighbours of $u_0$ such that $e_{T_i}(u_i)=d-1$, where $T_i$ are the components of $T-\cup_{i=1}^l\{u_0,u_i\}$ containing $u_i$, for $i=1,2,\hdots, l$. Let $T_{l+1}$ be the component of $T-\cup_{i=1}^l\{u_0,u_i\}$ containing $u_0$. Now, corresponding to the partition of $V(T)$ mentioned in Theorem \ref{inertia-even}, the eccentricity matrix of $T$ can be written as
\[ \mathcal{E}(T)=
\begin{blockarray}{cccccccccc}
    & V_1 & V_2 &\hdots & V_l & V_{l+1} & V_{l+2} & \hdots & V_{2l} & V_{2l+1}\\
    \begin{block}{c(ccccccccc)}
        V_1 & \textbf{0} & (2d)J & \hdots & (2d)J & \textbf{0} & P_{1,l+2} & \hdots & P_{1,2l} & P_{1,2l+1} \\
        V_2 & (2d)J & \textbf{0} & \hdots & (2d)J & P_{2,l+1} & \textbf{0} & \hdots & P_{2,2l} & P_{2,2l+1} \\
        \vdots & \vdots & \vdots & \ddots &\vdots &\vdots &\vdots &\ddots &\vdots & \vdots \\
        V_l & (2d)J & (2d)J  & \hdots & \textbf{0} & P_{l,l+1}  & P_{l,l+2} &\hdots & \textbf{0} & P_{l,2l+1} \\
        V_{l+1} & \textbf{0}  & P_{2,l+1}^{\prime} & \hdots & P_{l,l+1}^{\prime} & \textbf{0} & \textbf{0} & \hdots & \textbf{0} & \textbf{0}\\
        V_{l+2} & P_{1,l+2}^{\prime} & \textbf{0} & \hdots & P_{l,l+2}^{\prime} & \textbf{0} & \textbf{0} & \hdots & \textbf{0} & \textbf{0}\\
        \vdots & \vdots & \vdots & \ddots &\vdots &\vdots &\vdots &\ddots &\vdots & \vdots \\
        V_{2l} & P_{1,2l}^{\prime} & P_{2,2l}^{\prime} & \hdots & \textbf{0} & \textbf{0} & \textbf{0} & \hdots & \textbf{0} & \textbf{0}\\
        V_{2l+1} & P_{1,2l+1}^{\prime} & P_{2,2l+1}^{\prime} & \hdots & P_{l,2l+1}^{\prime}& \textbf{0} & \textbf{0} & \hdots & \textbf{0} & \textbf{0}\\
    \end{block}
\end{blockarray} ,
\]
where the rows of each matrix $P_{i,l+j}$ are identical with $i\neq j$ and $i=1,2,\hdots, l$; $j=1,2,\hdots, l,l+1$.
By Theorem \ref{inertia-even}, $\rank(\mathcal{E}(T))=2l$ and hence the characteristic polynomial of $\mathcal{E}(T)$ can be written as

\begin{equation}
 \phi_{\mathcal{E}}(\lambda)={\lambda}^{n-2l}({\lambda}^{2l}+c_1{\lambda}^{2l-1}+\hdots+c_{n-2l+1}\lambda+c_{n-2l}).
\end{equation}
To prove that the eigenvalues of $\mathcal{E}(T)$ are not symmetric with respect to origin, by Lemma \ref{symmetry-condition} it suffices to show that $c_{n-2l}\neq 0$ and $c_{n-2l+1}\neq 0$. Note that  $c_{n-2l}\neq 0$, as $\rank(\mathcal{E}(T))=2l$. By Lemma \ref{ch-poly-minor}, we have $c_{n-2l+1}=E_{2l-1}$, where $E_{2l-1}$ is the sum of all $(2l-1)\times (2l-1)$ principal minors of $\mathcal{E}(T)$. Let $A[\alpha]$ denote the $(2l-1)\times (2l-1)$ principal submatrix of $\mathcal{E}(T)$ whose rows and columns are indexed by $\alpha :=\{\alpha_1,\alpha_2,\hdots,\alpha_{2l-1}\}$. Since $R[V_1],R[V_2],\hdots,R[V_{2l}],R[V_{2l+1}]$ are  all of rank one in $\mathcal{E}(T)$, therefore if $\alpha_i,\alpha_j \in V_k$ for $k=1,2,\hdots,2l+1$, then $\det(A[\alpha])=0$. Thus, to get a  non-zero principal minor of order $(2l-1)\times (2l-1)$, we have to choose at most one row and one column from each $V_k$ for $k=1,2,\hdots,2l+1$.
%Without loss of generality, assume that $\alpha_i$ chooses $(v_1,v_2,\hdots,v_l,u_1,u_2,\hdots u_l,u_0)^T$ from each $V_k$ for $k=1,2,\hdots,2l+1$.
Therefore, if $\det(A[\alpha]) \neq 0$, then $A[\alpha]$ is a $(2l-1)\times (2l-1)$ principal submatrix of $A$, where

\[A=
\begin{blockarray}{cccccccccc}
& v_1 & v_2 &\hdots & v_l & u_1 & u_2 & \hdots & u_l & u_0\\
\begin{block}{c(ccccccccc)}
 v_1 & 0 & 2d & \hdots & 2d & 0 & d+1 & \hdots & d+1 &d  \\
 v_2 & 2d & 0 & \hdots & 2d & d+1 & 0 & \hdots & d+1 & d \\
 \vdots & \vdots & \vdots & \ddots &\vdots &\vdots &\vdots &\ddots &\vdots & d  \\
 v_l & 2d & 2d  & \hdots & 0 & d+1 & d+1 &\hdots & 0 & d \\
 u_1 & 0 & d+1 & \hdots & d+1 & 0 & 0 & \hdots & 0 & 0 \\
 u_2 & d+1 & 0 & \hdots & d+1 & 0 & 0 & \hdots & 0 & 0 \\
\vdots & \vdots & \vdots & \ddots &\vdots &\vdots &\vdots &\ddots &\vdots & \vdots\\
u_l & d+1 & d+1 & \hdots & 0 & 0 & 0 & \hdots & 0 & 0\\
u_0 & d & d & \hdots & d & 0 & 0 & \hdots & 0 & 0\\
\end{block}
\end{blockarray}~.\]
Now, by Lemma \ref{Pminor-A}, it follows that the sum of all  $(2l-1)\times (2l-1)$ principal minors of $A$ is non-zero. That is, the sum of all  $(2l-1)\times (2l-1)$ principal minor of $\mathcal{E}(T)$ is non-zero and hence $c_{n-2l+1}\neq 0$. This completes the proof.
\end{proof}

By combining Theorem \ref{symmetry-odd} and Theorem \ref{symmetry-even}, we obtain the following result.

\begin{thm}\label{symm-tree}
Let $T$ be a tree of order $n$. Then the eigenvalues of $\mathcal{E}(T)$ are symmetric about origin if and only if $T$ is a tree with odd diameter.
\end{thm}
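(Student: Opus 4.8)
The plan is to obtain the biconditional as an immediate synthesis of the two preceding theorems, since the diameter of a (nontrivial) tree is a positive integer that is either odd or even, and these two parities have already been treated separately.

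First I would dispose of the backward implication. If $\diam(T)$ is odd, then Theorem \ref{symmetry-odd} states precisely that the eigenvalues of $\mathcal{E}(T)$ are symmetric about the origin, which is the desired conclusion; nothing further is needed.

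For the forward implication I would argue by contraposition. A tree whose diameter is not odd has even diameter, and Theorem \ref{symmetry-even} guarantees that in this case the $\mathcal{E}$-eigenvalues cannot be symmetric about the origin. Equivalently, symmetry of the spectrum forces $\diam(T)$ to be odd. Because every tree falls into exactly one of the two mutually exclusive cases, the two implications together yield the claimed equivalence.

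There is no genuine obstacle remaining at this stage: the substantive work has already been carried out in the earlier results, and the only point worth recording is that the dichotomy ``odd versus even diameter'' is exhaustive, so that the two theorems jointly cover all trees. The real difficulty lay in Theorem \ref{symmetry-even}, whose proof reduces the question to the non-vanishing of the coefficient $c_{n-2l+1}=E_{2l-1}(\mathcal{E}(T))$ and hence to Lemma \ref{Pminor-A}; in the present synthesis that difficulty is already resolved, and we merely assemble the pieces.
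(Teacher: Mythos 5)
Your proposal is correct and matches the paper's own proof, which likewise obtains Theorem \ref{symm-tree} by combining Theorem \ref{symmetry-odd} (the backward implication) with Theorem \ref{symmetry-even} (the forward implication via the odd/even dichotomy). Nothing is missing; the paper itself records this synthesis in a single sentence.
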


In \cite{wang2020spectral}, the authors posed the following problem.
\begin{prob}[{\cite{wang2020spectral}}]
Characterize the graphs with small number of distinct $\mathcal{E}$-eigenvalues.
\end{prob}
Motivated by this, in the following theorem we characterize the trees with three distinct $\mathcal{E}$-eigenvalues.

\begin{thm}
Let $T$ be a tree on $n\geq 4$ vertices. Then, $T$ has exactly three distinct $\mathcal{E}$-eigenvalues if and only if it is a star.
\end{thm}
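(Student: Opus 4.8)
The plan is to prove both directions of the characterization. For the forward direction, suppose $T$ has exactly three distinct $\mathcal{E}$-eigenvalues. I would first rule out the possibility that $T$ has odd diameter: by Corollary \ref{distinct-odd}, any tree on $n\geq 5$ vertices with odd diameter has exactly five distinct $\mathcal{E}$-eigenvalues, which exceeds three; and the only tree on $n=4$ vertices with odd diameter is the path $P_4$ (diameter $3$), whose $\mathcal{E}$-eigenvalues can be computed directly to check that it does not have exactly three distinct values. Hence $T$ must have even diameter. By Theorem \ref{symmetry-even}, the $\mathcal{E}$-spectrum of a tree with even diameter is \emph{not} symmetric about the origin.

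Now I would exploit the inertia result. For a tree with even diameter $2d$, Theorem \ref{inertia-even} gives $In(\mathcal{E}(T))=(l,l,n-2l)$, where $l$ is the number of diametrically distinguished vertices. If $T$ is \emph{not} a star, then $d\geq 2$ and $l\geq 2$, so $\mathcal{E}(T)$ has at least two positive and at least two negative eigenvalues, all nonzero. The key observation is that with exactly three distinct $\mathcal{E}$-eigenvalues and at least four nonzero eigenvalues present (at least two positive, at least two negative), the three distinct values cannot accommodate this pattern while also being non-symmetric. Concretely, if there were a zero eigenvalue (i.e.\ $n>2l$), the three distinct eigenvalues would be $\{\xi_+,0,\xi_-\}$ with $\xi_+>0>\xi_-$; but then the nonzero spectrum consists only of $\xi_+$ (with some multiplicity) and $\xi_-$ (with some multiplicity), and by the Perron--Frobenius theorem $\xi_+=\rho(\mathcal{E}(T))$ is simple (Theorem \ref{ecc irreducible} guarantees irreducibility), forcing $n_+=1$, contradicting $l\geq 2$. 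If instead $n=2l$ (no zero eigenvalue), the three distinct values are all nonzero, but with $n_+=n_-=l\geq 2$ and $\xi_+$ simple, the two positive eigenvalues are distinct and the two negative eigenvalues are distinct, already giving four distinct nonzero values, a contradiction. Either way, a non-star tree with even diameter cannot have exactly three distinct $\mathcal{E}$-eigenvalues, so $T$ must be a star.

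For the reverse direction, I would simply invoke Theorem \ref{star-spec}, which shows directly that $\Spec_{\mathcal{E}}(K_{1,n-1})$ consists of the three distinct values $n-2+\sqrt{n^2-3n+3}$, $n-2-\sqrt{n^2-3n+3}$, and $-2$ (one checks these are pairwise distinct for $n\geq 4$). This completes the reverse direction immediately.

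The main obstacle I anticipate is making the counting argument in the forward direction airtight, since the simplicity of the Perron eigenvalue is what breaks the symmetry between the positive and negative sides. The cleanest route leans on the combination of Theorem \ref{ecc irreducible} (irreducibility, hence a simple Perron root) together with the inertia count from Theorem \ref{inertia-even}: simplicity forces $n_+$ of the distinct positive eigenvalues to be at least $\min(n_+,2)$ distinct values whenever $n_+\geq 2$, and the same on the negative side only if the spectrum were symmetric---which Theorem \ref{symmetry-even} forbids. I would take care to treat the small cases ($n=4$, and the borderline $l=2$, $n=4$ configuration, which is the path $P_4$ versus the star $K_{1,3}$) explicitly so that no edge case slips through.
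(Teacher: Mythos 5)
Your reverse direction and your handling of odd diameter are correct and match the paper: Theorem \ref{star-spec} for the star, Corollary \ref{distinct-odd} for $n\geq 5$, and a direct computation for $P_4$. Your Case 1 of the forward direction (zero is an eigenvalue) is also correct and is essentially the paper's own argument: inertia $(l,l,n-2l)$ with $l\geq 2$ together with simplicity of the Perron root forces at least two distinct positive eigenvalues, so the spectrum cannot be of the form $\{\xi_+,0,\xi_-\}$.

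The gap is in your Case 2 ($n=2l$, no zero eigenvalue). There you assert that ``the two negative eigenvalues are distinct,'' but nothing justifies this: Perron--Frobenius gives simplicity of the \emph{largest} eigenvalue only, and there is no analogous statement for the smallest one (the star itself has least $\mathcal{E}$-eigenvalue $-2$ with multiplicity $n-2$). A hypothetical spectrum consisting of $\xi_1$ simple, $\xi_2>0$ with multiplicity $l-1$, and $\xi_3<0$ with multiplicity $l$ has $n_+=n_-=l\geq 2$, exactly three distinct values, can have zero trace, and is not symmetric about the origin -- so neither the trace argument nor Theorem \ref{symmetry-even} (which you invoke rather vaguely in your closing paragraph) rules it out. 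What actually saves the theorem is that Case 2 is vacuous, and this is precisely the fact the paper uses: in the partition of Theorem \ref{inertia-even} the center $u_0$ lies in $V_{2l+1}$, so $n\geq 2l+1$ and the nullity $n-2l$ is at least $1$; hence zero is \emph{always} an eigenvalue of $\mathcal{E}(T)$ for a non-star tree of even diameter. Once this is noted, your Case 1 is the entire proof. As written, however, your case split presents $n=2l$ as a live possibility and disposes of it with an unjustified claim, so the proof is incomplete until you either prove $n>2l$ or supply a correct argument for that case.
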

\begin{proof}
If $T\cong K_{1,n-1}$, then by Lemma \ref{star-spec}, we have
\[\Spec_{\mathcal{E}}(K_{1,n-1})=
    \left\{ {\begin{array}{ccc}
        n-2+\sqrt{n^2-3n+3}  & n-2-\sqrt{n^2-3n+3} & -2  \\
        1 & 1 & n-2 \\
        \end{array} } \right\}.\]
Therefore, the star $K_{1,n-1}$ has  three distinct $\mathcal{E}$-eigenvalues.

For $n=4$, $P_4$ and $K_{1,3}$ are the only trees on $4$ vertices. It is easy to see that $P_4$ has  four distinct $\mathcal{E}$-eigenvalues. So, let us assume that $n\geq 5$. From Corollary \ref{distinct-odd}, it follows that if $T$ is a tree on $n\geq 5$ vertices with odd diameter, then it has exactly five distinct $\mathcal{E}$-eigenvalues.

Let $T$ be a tree with even diameter other than star. From Theorem \ref{inertia-even}, it follows that $T$ has at least two positive and two negative $\mathcal{E}$-eigenvalues. Since $\mathcal{E}(T)$ is a nonnegative irreducible matrix, by the Perron-Frobenius theorem, $\xi_1$ is simple and hence $\mathcal{E}(T)$ has at least two distinct positive eigenvalues. Again, by Theorem \ref{inertia-even}, zero is always an eigenvalue of $\mathcal{E}(T)$. Since trace of $\mathcal{E}(T)$ is zero, it has at least one distinct negative eigenvalue. Thus, $T$ has at least four distinct $\mathcal{E}$-eigenvalues. This completes the proof.
\end{proof}

 We conclude this article with an observation and a problem to investigate in the future. In Theorem \ref{symm-tree}, we have proved that the $\mathcal{E}$-eigenvalues of a tree $T$ are symmetric about origin if and only if $T$ is a tree with odd diameter.  A graph  $G$ is a \textit{diametrical graph} if every vertex $v\in V(G)$ has a unique vertex $\bar{v}\in V(G)$ such that $d_G(v,\bar{v})=\diam(G)$. For a diametrical graph $G$ with diameter $d$, the eccentricity matrix of $G$ can be written as  $ \left( {\begin{array}{cc}
 0 & dI_k\\
 dI_k & 0\\
\end{array} } \right )$, and the  $\mathcal{E}$-spectrum  of $G$ is  $ \left \{ {\begin{array}{cc}
 d & -d\\
 k & k\\
\end{array} } \right \}$. Thus, the $\mathcal{E}$-eigenvalues of a diametrical graph $G$ with diameter $d$ are symmetric with respect to the origin. So, it is natural to propose the following problem for future study.

\begin{prob}
Characterize the graphs for which the $\mathcal{E}$-eigenvalues are symmetric with respect to the origin.
\end{prob}

\bibliographystyle{plain}
\bibliography{ecc-ref}
\end{document}